\begin{document}

\newtheorem{theorem}{Theorem}[section]
\newtheorem{proposition}[theorem]{Proposition}
\newtheorem{lemma}[theorem]{Lemma}
\newtheorem{corollary}[theorem]{Corollary}
\newtheorem{conjecture}[theorem]{Conjecture}
\newtheorem{question}[theorem]{Question}
\newtheorem{problem}[theorem]{Problem}
\theoremstyle{definition}
\newtheorem{definition}{Definition}
\newcommand{\tmop}[1]{\ensuremath{\operatorname{#1}}}

\theoremstyle{remark}
\newtheorem{remark}[theorem]{Remark}

\renewcommand{\labelenumi}{(\roman{enumi})}
\def\theenumi{\roman{enumi}}

\numberwithin{equation}{section}

\renewcommand{\Re}{\operatorname{Re}}
\renewcommand{\Im}{\operatorname{Im}}

\def\scrA{{\mathcal A}}
\def\scrB{{\mathcal B}}
\def\scrD{{\mathcal D}}
\def\scrL{{\mathcal L}}
\def\scrS{{\mathcal S}}

\def \G {{\Gamma}}
\def \g {{\gamma}}
\def \R {{\mathbb R}}
\def \H {{\mathbb H}}
\def \C {{\mathbb C}}
\def \Z {{\mathbb Z}}
\def \Q {{\mathbb Q}}
\def \TT {{\mathbb T}}
\newcommand{\T}{\mathbb T}
\def \GinfmodG {{\Gamma_{\!\!\infty}\!\!\setminus\!\Gamma}}
\def \GmodH {{\Gamma\setminus\H}}
\def \sl  {\hbox{SL}_2(\mathbb Z)}
\def \slr  {\hbox{SL}_2(\mathbb R)}
\def \psl  {\hbox{PSL}_2(\mathbb R)}

\newcommand{\mattwo}[4]
{\left(\begin{array}{cc}
                        #1  & #2   \\
                        #3 &  #4
                          \end{array}\right) }

\newcommand{\rum}[1] {\textup{L}^2\left( #1\right)}
\newcommand{\norm}[1]{\left\lVert #1 \right\rVert}
\newcommand{\abs}[1]{\left\lvert #1 \right\rvert}
\newcommand{\inprod}[2]{\left \langle #1,#2 \right\rangle}
\newcommand{\tr}[1] {\hbox{tr}\left( #1\right)}

\renewcommand{\^}[1]{\widehat{#1}}

\renewcommand{\i}{{\mathrm{i}}}

\date{\today}

\newcommand{\area}{\operatorname{area}}
\newcommand{\ecc}{\operatorname{ecc}}

\newcommand{\dom}{\operatorname{Dom}}
\newcommand{\Dom}{\operatorname{Dom}}

\newcommand{\Norm}{\mathcal N}
\newcommand{\simgeq}{\gtrsim}%
\newcommand{\simleq}{\lesssim}
%%these are with amssym, else use {\stackrel{>}{\sim}}

\newcommand{\UN}{U_N}
\newcommand{\OPN}{\operatorname{Op}_N}
\newcommand{\HN}{\mathcal H_N}
\newcommand{\TN}{T_N}  %{\hat T_N}
\newcommand{\PDO}{\Psi\mbox{DO}}
\newcommand{\Vol}{\operatorname{Vol}}
\newcommand{\vol}{\operatorname{vol}}
\newcommand{\dvol}{\operatorname{dvol}}

\newcommand{\dcap}{\operatorname{cap}}
\newcommand{\vE}{\mathcal E}
\newcommand{\ncap}{{\mathfrak n}}

\title{Small scale equidistribution of eigenfunctions  on the torus}
 
\author{Stephen Lester and Ze\'ev Rudnick  }
\address{Department of mathematics, KTH, SE-100 44  Stockholm, Sweden}
\email{sjlester@gmail.com}
\address{Raymond and Beverly Sackler School of Mathematical Sciences,
Tel Aviv University, Tel Aviv 69978, Israel}
\email{rudnick@post.tau.ac.il}

\begin{abstract}
We study the small scale distribution of the $L^2$ mass of eigenfunctions of the Laplacian on the flat torus $\TT^d$.
Given an orthonormal basis of eigenfunctions, we show
the existence of a density one subsequence whose $L^2$ mass equidistributes at small scales.
In dimension two our result holds all the way down to the Planck scale.  For   dimensions $d=3,4$  we can restrict to individual eigenspaces and show small scale equidistribution in that context.

We also study irregularities of quantum equidistribution: 
We construct  eigenfunctions whose $L^2$ mass does not equidistribute at all scales above the Planck scale.
Additionally, in  dimension $d=4$ we show the existence of eigenfunctions for which the proportion of $L^2$ mass
in small balls blows up at certain scales.

\end{abstract}
 
\date{\today}
\maketitle

\section{Introduction}

\subsection{The semiclassical eigenfunction hypothesis}
Let $M$ be a compact Riemannian manifold (smooth, connected and with no boundary), with associated Laplace-Beltrami operator $\Delta$, and $\{\psi_n\}$ an orthonormal basis of $L^2(M,\dvol)$ consisting of eigenfunctions: $-\Delta \psi_n = \lambda_n\psi_n$,  where $\dvol$ is the normalized Riemannian volume form.
If the geodesic flow is ergodic, 
the   Quantum Ergodicity Theorem \cite{Schnirelman, Zelditch, CdV} says that for any choice of orthonormal basis (ONB) 
$\{\psi_n\}$ consisting of eigenfunctions of the
Laplacian, there is a density one subsequence of these eigenfunctions which are uniformly distributed in the unit cotangent bundle $S^*M$, where a density one subsequence $\{ \psi_{n_\ell} \} \subset \{ \psi_n \}$ of eigenfunctions is one such that
\[
\lim_{\Lambda \rightarrow \infty} \frac{\#\{ \psi_{n_\ell} : \lambda_{n_\ell} \le \Lambda \}}{\# \{ \lambda_n \le \Lambda \}}=1.
\]
(For certain chaotic billiards,   exceptional eigenfunctions do exist, see \cite{Hassell}.) In particular, there is a density-one subsequence of the eigenfunctions so that the probability densities $|\psi_n(x)|^2$ converge weakly to the uniform distribution in configuration space $M$ along this subsequence, i.e. for any (nice)   fixed  subset $\Omega\subseteq M$ of positive measure,  
$$
\frac 1{\vol(\Omega)}\int_{\Omega} |\psi_n(x)|^2\dvol(x)\to 1 \;.
$$
Uniform distribution in configuration space is not only a feature of ergodicity: Marklof and Rudnick \cite{MR} show that this is also the case for  rational polygons, and for flat tori.

 M.V. Berry \cite{BerryJPhysA77, Berrysurvey} in his work on the ``Semiclassical Eigenfunction Hypothesis'' (see also \cite{Voros}), proposed to go beyond uniform distribution, and study the amplitudes  $|\psi_n(x)|^2$ when smoothed over  regions in $M$,   whose diameter shrinks as $\lambda_n\to \infty$, but at a rate slower than the Planck scale $\hbar\approx 1/\sqrt{\lambda_n}$,   
 that is  to study the local averages  
 \begin{equation}\label{local average}
 \frac 1{\vol B(x_n,r_n)} \int_{B(x_n, r_n)}|\psi_n(x)|^2 \dvol(x)
 \end{equation}
 where $B(x_n,r_n)$ is a geodesic ball of radius $r_n$ centered at $x_n\in M$,   
so that as $\lambda_n\to \infty$,  $r_n\to 0$, but  $r_n \sqrt{\lambda_n}\to \infty$. We will say that small scale equidistribution of the eigenfunctions $\{\psi_n\}$ holds if \eqref{local average} tends to $1$.

There are very few rigorous results  on small scale equidistribution in the literature.  
Luo and Sarnak \cite{LS95} studied the case of the modular surface, and the orthonormal set of eigenfunctions of the Laplacian which are eigenfunctions of all Hecke operators, showing that for these, small scale equidistribution holds
along a density one subsequence for radii $r\gg \lambda^{-\alpha}$, for some small $\alpha>0$. Under the assumption
of the Generalized Riemann Hypothesis
Young \cite{Young} showed that small scale
equidistribution holds for \textit{all} such eigenfunctions for radii $r \gg \lambda^{-1/4+o(1)}$.

The case of  compact  manifolds with negative sectional curvature was recently investigated independently by    Hezari and Rivi\`ere \cite{HRneg} and Han \cite{Han} who obtained commensurability of the masses along a density one subsequence for logarithmically small radii $r=(\log \lambda)^{-\alpha}$ $(0 \le \alpha  < \frac{1}{3 \tmop{dim} M}$): 

\begin{equation*}%\label{commensurability} 
a_1\leq \frac 1{ \vol(B(x_n, r_n)) } \int_{B(x_n,r_n)}|\psi_{n}(x)|^2 \dvol(x) \leq a_2
\end{equation*}
along the subsequence, where the constants $0<a_1<a_2$  are independent of the centers of the balls $x_n$ and of the subsequence.

 \subsection{Small scale equidistribution on the flat torus}

 The case of interest for us is  that   of the flat $d$-dimensional torus  $\TT^d=\R^d/2\pi \Z^d$. 
 The ``Semiclassical Eigenfunction Hypothesis'' predicts that \eqref{local average}  converges to $1$  
in this setting for radii $r_n \rightarrow 0$ with $r_n\sqrt{\lambda_n} \rightarrow \infty$, as $\lambda \rightarrow \infty$. 
 Hezari and Rivi\`ere \cite{HRtor} have recently studied small scale equidistribution in  $\TT^d$. They show that for a fixed center $x_0\in \TT^d$, for any ONB of eigenfunctions $\{\psi_n\}$, there is a density one subsequence %$\{\psi_{n_k}\}$ 
so that for all balls $B(x_0,r_n)$ of radius $r_n>\lambda_n^{-\frac 1{4(d+1)}}$   
one has  that \eqref{local average} tends to $1$ along the subsequence. 

Note that below the Planck scale $r=\lambda^{-1/2}$, equidistribution
fails badly. 
For example, consider the ONB of eigenfunctions $\psi^-_{\mu}(x)=\sqrt{2}  \sin (\langle \mu, x \rangle)$, 
$\psi^+_{\mu}(x)=\sqrt{2}  \cos (\langle \mu, x \rangle)$, $\mu \in \mathbb Z^d/\{\pm 1\}$, with eigenvalue $\lambda=|\mu|^2$. For $r=o(\lambda^{-1/2})$, and $x \in B(0,r)$ one has $\psi^\pm_{\mu}(x)\sim \psi^\pm_{\mu}(0)=1\pm 1$, so that every eigenfunction in this ONB is not equidistributed below the Planck scale.

One of our goals is to prove small scale equidistribution  on $\TT^d$, uniformly for all not too small balls.   We succeed for radii $r_n\gg \lambda_n^{-\frac 1{2(d-1)}+o(1)}$, in particular in dimension $d=2$, our result extends all the way down to the Planck scale $r\gg \lambda^{-1/2+o(1)}$:
\begin{theorem}\label{thm:small balls}
Let $\{\psi_n\}$ be an orthonormal basis of eigenfunctions of $L^2(\TT^d,\dvol)$, and 
$$ 
\mathcal B_n=\left\{ B(y,r) \subset \TT^d : r >  \lambda_n^{\frac{-1}{2(d-1)}+o(1)} \right\}.
$$ 
Then along a density one subsequence, 
\[
\lim_{n\to \infty} \sup_{\substack{B(y,r) \in \mathcal B_n}} \left|
\frac {1}{\vol(B(y,r))} \int_{B(y,r)} |\psi_n(x)|^2 \, \dvol(x)- 1 \right|=0\;.
\]
\end{theorem}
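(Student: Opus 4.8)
The plan is to prove the effective statement that, for every fixed $\epsilon>0$,
\[
\#\Big\{\,n:\lambda_n\le\Lambda,\ \sup_{B\in\mathcal B_n}\Big|\tfrac1{\vol(B)}\!\int_B|\psi_n|^2\dvol-1\Big|>\epsilon\,\Big\}=o\big(\#\{\lambda_n\le\Lambda\}\big),
\]
from which the density-one subsequence follows by a diagonal argument over $\epsilon=1/j$. The supremum over radii is removed first, routinely: since $r\mapsto\int_{B(y,r)}|\psi_n|^2$ is nondecreasing while $\vol(B(y,r'))/\vol(B(y,r))=(r'/r)^d$, comparison of concentric balls shows that, up to an additive $O_d(\delta)$, the supremum over the admissible range of $r$ (which spans $O(\log\Lambda)$ dyadic scales) is attained on a geometric net of $O_d(\delta^{-1}\log\Lambda)$ radii; one fixes $\delta$ small and lets $\delta\to0$ at the end. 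So it suffices to treat a single radius $r\gg\Lambda^{-1/(2(d-1))+o(1)}$ at the cost of a factor $\log\Lambda$. Writing $\psi_n=\sum_{|\xi|^2=\lambda_n}c^{(n)}_\xi e^{\i\inprod\xi x}$ and $V_r=\vol(B(0,r))$, the discrepancy becomes the trigonometric polynomial
\[
D_n(y,r):=\tfrac1{V_r}\!\int_{B(y,r)}|\psi_n|^2\dvol-1=\tfrac1{V_r}\sum_{k\ne0}\widehat{|\psi_n|^2}(k)\,\widehat{\mathbf1_{B(0,r)}}(k)\,e^{\i\inprod ky},
\]
whose frequencies lie in the sphere's difference set $\Delta_n=\{\xi-\xi':|\xi|^2=|\xi'|^2=\lambda_n\}$, and the analytic ingredient is the decay $\big|\widehat{\mathbf1_{B(0,r)}}(k)\big|\simleq_d V_r\min\!\big(1,(r|k|)^{-(d+1)/2}\big)$.

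The core is a mean-square bound over centres. Integrating in $y$, $\int_{\TT^d}|D_n(y,r)|^2\dvol(y)=V_r^{-2}\sum_{k\ne0}\big|\widehat{\mathbf1_{B(0,r)}}(k)\big|^2\,|\widehat{|\psi_n|^2}(k)|^2$, and writing $\widehat{|\psi_n|^2}(k)=\inprod{\Sigma_k\psi_n}{\psi_n}$ for the partial isometry $\Sigma_k:e_\eta\mapsto e_{\eta+k}$ on $\{|\eta|^2=\lambda_n\}$, Bessel's inequality within each eigenspace yields $\sum_{\lambda_n=\lambda}|\widehat{|\psi_n|^2}(k)|^2\le\|\Sigma_k\|_{\mathrm{HS}}^2=\#\{\eta\in\Z^d:|\eta|^2=|\eta+k|^2=\lambda\}$. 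Summing over $\lambda\le\Lambda$ and interchanging orders of summation,
\[
\sum_{\lambda_n\le\Lambda}\int_{\TT^d}|D_n(y,r)|^2\dvol(y)\ \le\ \frac1{V_r^2}\sum_{0<|\eta|\le\sqrt\Lambda}\ \sum_{\substack{\xi\ne\eta\\|\xi|=|\eta|}}\big|\widehat{\mathbf1_{B(0,r)}}(\xi-\eta)\big|^2 .
\]
The arithmetic input is the bound $P(\Lambda,t):=\#\{(\eta,\xi):|\eta|=|\xi|\le\sqrt\Lambda,\ |\eta-\xi|\asymp t\}\simleq_d\Lambda^{(d-1)/2}t^{d-1}$ for $1\le t\le2\sqrt\Lambda$: writing $w=\eta-\xi$, the equal-norm condition puts $\eta$ on the affine hyperplane $\{\inprod\eta w=-|w|^2/2\}$, which carries $\simleq\Lambda^{(d-1)/2}/\mathrm{covol}(\Z^d\cap w^\perp)+\Lambda^{(d-2)/2}$ lattice points of $B(0,\sqrt\Lambda)$, and one sums over $w$ using $\mathrm{covol}(\Z^d\cap w^\perp)=|w|/\gcd(w)$ together with $\sum_{|w|\asymp t}\gcd(w)\simleq_d t^d$ (a logarithm appears when $d=2$). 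Feeding $P(\Lambda,t)$ and the ball decay into a dyadic decomposition of $|\xi-\eta|$ gives
\[
\sum_{\lambda_n\le\Lambda}\int_{\TT^d}|D_n(y,r)|^2\dvol(y)\ \simleq_d\ \Lambda^{(d-1)/2}\,r^{-(d-1)}(\log\Lambda)^{O_d(1)},
\]
which is $o(\Lambda^{d/2})=o(\#\{\lambda_n\le\Lambda\})$ precisely when $r\gg\Lambda^{-1/(2(d-1))+o(1)}$; this is the origin of the exponent in the theorem.

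The main obstacle is to upgrade this $L^2$-in-$y$ control to a genuine supremum over all centres. The key is that $D_n(\cdot,r)$ does not use all frequencies of degree $\lesssim\sqrt{\lambda_n}$: they lie in the sparse set $\Delta_n$, and only those with $|k|\lesssim1/r$ carry coefficients of order $V_r$. In dimension $d=2$ this already suffices, for the difference set of a circle satisfies $\#\Delta_n\le r_2(\lambda_n)^2=\lambda_n^{o(1)}$, so Cauchy--Schwarz on the Fourier side gives $\sup_y|D_n(y,r)|\le(\#\Delta_n)^{1/2}\norm{D_n(\cdot,r)}_{L^2}=\lambda_n^{o(1)}\norm{D_n(\cdot,r)}_{L^2}$; squaring, summing over $n$, and invoking the mean-square bound closes the argument down to the Planck scale, giving the $d=2$ case in full. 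For $d\ge3$ one splits $D_n(\cdot,r)=D_n^\flat+D_n^\sharp$ at $|k|=1/r$: the low part is controlled by $\sup_y|D_n^\flat(y,r)|^2\le m_{\lambda_n}\norm{D_n(\cdot,r)}_{L^2}^2$ with $m_\lambda=\#(\Delta_\lambda\cap B(0,1/r))$, bounded on average via the same hyperplane count; the high part $D_n^\sharp$ is estimated dyadically, combining the decay $\big|\widehat{\mathbf1_{B(0,r)}}(k)\big|\simleq V_r(r|k|)^{-(d+1)/2}$ on the shell $|k|\asymp2^j$ with $\sum_{|k|\asymp2^j}|\widehat{|\psi_n|^2}(k)|\le(m^{(j)}_{\lambda_n})^{1/2}\big(\sum_{|k|\asymp2^j}\inprod{\Sigma_{-k}\Sigma_k\psi_n}{\psi_n}\big)^{1/2}\le(m^{(j)}_{\lambda_n})^{1/2}(C_d\,2^{j(d-1)})^{1/2}$, where the last step uses the cap bound $\#\{\xi:|\xi|=|\eta|,\ |\xi-\eta|\asymp2^j\}\simleq_d2^{j(d-1)}$ and $\sum_\eta|c^{(n)}_\eta|^2=1$, so that the super-geometric gain from the ball's decay makes the tail lower order. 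Carrying out this uniform control over all centres for $d\ge3$ — balancing the Fourier decay of the ball against the proliferation of lattice points on spherical caps, and isolating a density-$o(1)$ exceptional set of eigenfunctions where the difference set is atypically large — is, I expect, the most delicate part of the proof, and the reason it is harder than for $d=2$. Finally one reassembles: summing over the $O_d(\log\Lambda)$ radii of the net, discarding the exceptional eigenfunctions, and applying Chebyshev yields the desired $o(\#\{\lambda_n\le\Lambda\})$ bound.
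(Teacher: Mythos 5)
Your plan diverges from the paper's at the very first step, and for $d\ge 3$ this divergence leaves a real gap. You work with the exact indicator $\mathbf 1_{B(y,r)}$ and therefore must cope with an infinite Fourier tail, run an $L^2$-in-$y$ average, and then upgrade to a supremum over centres. The paper instead replaces $\mathbf 1_{B(y,r)}$ by band-limited Beurling--Selberg majorants and minorants $b_{n,y}^{\pm}$ (Harman's Lemma~\ref{lem:BSfns}); the decisive feature is that $|\widehat b_{n,y}^{\pm}(\zeta)|=|\widehat a_n^{\pm}(\zeta)|$ is \emph{independent of the centre $y$}, so the supremum over $y$ (and over admissible $r$) is removed for free, by the triangle inequality, with no $L^2\to L^\infty$ step at all: one simply gets
\[
\sup_{B(y,r)\in\mathcal B_n}\Big|\tfrac1{\vol B}\!\int_B|\psi_n|^2\dvol -1\Big|\ \ll\ \sum_{1\le|\zeta|\le\lambda_n^{\theta_2}}\big|\langle e_\zeta\psi_n,\psi_n\rangle\big| + O(\lambda_n^{\theta_1-\theta_2}),
\]
and then Chebyshev, Lemma~\ref{lem:eigenspace}, and the geometry-of-numbers count of Lemma~\ref{lem:geom numbers} finish the job in every dimension in a few lines. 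Because the polynomial is band-limited, there is also no high-frequency tail to estimate. Your mean-square-in-$y$ bound is correct (it is essentially the $L^2$ version of the same lattice count, via a Bessel inequality in place of the $L^1$ argument of Lemma~\ref{two ONB}/\ref{lem:eigenspace}), and your $d=2$ conclusion does close: with frequencies confined to the difference set $\Delta_n$ of cardinality $R_2(\lambda_n)^2=\lambda_n^{o(1)}$, Cauchy--Schwarz in Fourier gives $\sup_y|D_n|\le(\#\Delta_n)^{1/2}\norm{D_n}_{L^2}$ and the rest follows. So for $d=2$ you have a genuinely different and correct route, trading the Beurling--Selberg construction for the arithmetic sparsity of lattice points on circles.

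For $d\ge3$, however, the proposal has a gap that you yourself flag. The split $D_n=D_n^\flat+D_n^\sharp$ and the dyadic treatment of $D_n^\sharp$ rest on the cap bound $\#\{\xi\in\Z^d:|\xi|=|\eta|,\ |\xi-\eta|\asymp 2^j\}\ll_d 2^{j(d-1)}$, asserted without proof. This is not an elementary fact: lattice points on spheres can cluster in caps, and indeed \S\ref{sec:deep} of the paper (Lemma~\ref{lem:deepscar} together with the Pall--Taussky formula) is precisely a demonstration that for $d=4$ the \emph{average} cap count $\tfrac1{N_\lambda}\sum_\nu\ncap(\nu,Y)$ blows up already for $Y\gg\lambda^{1/6+o(1)}$, so one cannot take pointwise control of caps for granted; quantitative cap estimates in dimensions $3$ and $4$ are exactly what the half-integral-weight / quaternionic inputs of \S\ref{sec:arith} are brought in for. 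Moreover the low-frequency piece $D_n^\flat$ is only controlled through a quantity $m_{\lambda_n}$ that is bounded ``on average,'' so you still need to isolate an exceptional set of eigenvalues where this average control fails, and you have not shown this set has density zero. None of this is needed in the paper's proof because the $L^\infty$-over-$y$ statement is baked into the mollifier, and the dimension $d$ enters only through the harmless lattice-point count of Lemma~\ref{lem:geom numbers}. In short: a correct and leaner route in $d=2$, but in $d\ge3$ you have replaced a one-line reduction by a hard open-ended estimate; the missing idea is the use of a band-limited majorant whose Fourier data is translation-invariant in modulus.
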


This result gives that the $L^2$ mass of ``almost all'' eigenfunctions in the given orthonormal basis is uniformly distributed in every small ball $B(y,r)$. Even though our result
does not reach the Planck scale for dimensions $d>2$, the scale we achieve is actually optimal (up to the $\lambda^{o(1)}$ factor). This was pointed out to us by Jean Bourgain (see Remark~\ref{rem:Bourgain} after Theorem \ref{thm:3dloc}).

\subsection{Irregularities in quantum equidistribution}

Theorem~\ref{thm:small balls} leaves open the existence of exceptional sequences of eigenfunctions. In Theorem~\ref{prop:scar2} we show that these do exist, so that one cannot improve the ``almost all" statement. We show that  there is a sequence of eigenvalues $\lambda_n\to \infty$ and corresponding $L^2$-normalized eigenfunctions $\psi_n$ so that for any choice of radii $r_n$ so that $r_n\to 0$, but $r_n\sqrt{\lambda_n}\to \infty$, 
$$
\lim_{n\to \infty}
\frac 1{\vol(B(0,r_n))}\int_{B(0,r_n)}|\psi_n( x)|^2 \dvol(x) = 2  \;.
$$
For a fixed radius $r\approx 1$, see \cite{Jakobson} for information on possible ``quantum limits''. 

In   dimension  $d=4$, we can also create ``massive irregularities", where we find an infinite sequence of eigenvalues $\lambda_n\to \infty$,  so that given any sequence of  balls $B(x_n,r_n)$ of radius 
$r_n\ll \lambda^{-\frac 1{2(d-1)}-o(1)}$, there are normalized eigenfunctions $\psi_n$ whose $L^2$-mass on the specific balls $B(x_n,r_n)$ blows up: 
$$
\lim_{n\to \infty} \frac 1{\vol(B(x_n,r_n))} \int_{B(x_n,r_n)} |\psi_n(x)|^2 \, \dvol(x)=\infty \;.
$$
 A related feature was found on certain negatively curved surfaces by Iwaniec and Sarnak \cite{IS}, who found eigenfunctions of the Laplacian   whose values blow up at special points, see also \cite{Milicevic}.

On the other hand, in dimension $d=2$ we
rule out the existence of such ``massive irregularities"
at scales above $r>\lambda^{-1/4+o(1)}$ and
expect that they do not exist at all for $r>\lambda^{-1/2+o(1)}$, i.e. just above the Planck scale.
We will show that for every  eigenfunction $\psi(x)$ in dimension $d=2$ that 
for radii $r > \lambda^{-1/4+o(1)}$
\begin{equation} \label{eq:nodscar}
\sup_{y \in \TT^2}\frac{1}{\tmop{vol}(B(y,r))}
\int_{B(y,r)} |\psi(x)|^2 \dvol(x) \ll 1.
\end{equation}
The problem of obtaining an
upper bound for the proportion of $L^2$ mass of eigenfunctions
in small balls was previously studied by
Sogge \cite{Sogge},
who showed for any compact $d$-dimensional Riemannian manifold (smooth, connected and with no boundary) $M$ and an $L^2$-normalized eigenfunction of
the Laplace-Beltrami operator $\psi$ that  
\[
\sup_{y \in M}
\frac{1}{\tmop{vol}(B(y,r))}
\int_{B(y,r)} |\psi(x)|^2 \dvol(x) \ll r^{1-d},
\]
for $r> \lambda^{-1/2}$.

\subsection{Localizing on eigenspaces}\label{subsec:loc}

In higher dimensions ($d\geq 3$), the eigen-spaces have fairly large dimension, 
and we can also localize on each $\lambda$-eigenspace in dimensions $d= 3,4$.  
That is, prove analogues of Theorem~\ref{thm:small balls} when we restrict to an orthonormal basis of an individual eigenspace. 
For instance, in dimension $d=3$ for $\lambda \not\equiv 0,4,7 \pmod 8$, the dimension of the $\lambda$-eigenspace, which we denote $N_{\lambda}$, is quite large of size $\approx \lambda^{\tfrac12\pm o(1)}$; for $d=4$ and $\lambda$ odd, $\lambda\leq N_{\lambda}\ll \lambda^{1+o(1)}$.
Using results from the arithmetic theory of quadratic forms, we show 

\begin{theorem} \label{thm:3dloc}
Suppose that $d=3$ and $\lambda \not\equiv 0,4,7 \pmod 8$, or, $d=4$ and $\lambda$ is odd.
Let $\{\psi_n\}_{\lambda_n=\lambda}$
be an ONB of eigenfunctions of the $\lambda$-eigenspace
and let 
\[
\mathcal B_{\lambda}=\left\{ B(y,r) \subset \TT^d  
: r > \lambda^{-\frac{1}{2(d-1)}+o(1)} \right\}.
\]
Then there exists a subset   $S_\lambda \subseteq \{\psi_{n}\}_{\lambda_n=\lambda}$ of cardinality 
$N_{\lambda}(1+o(1))$, as $\lambda \rightarrow \infty$, which consists of eigenfunctions such that
\[
\sup_{B(y,r) \in \mathcal B_{\lambda}} \left|
\frac 1{\vol(B(y,r))} \int_{B(y,r)} |\psi_n(x)|^2 \, \dvol(x)-1 \right|=o(1) ,\quad \lambda \rightarrow \infty, \;\psi_n\in S_\lambda\;.
\] 
\end{theorem}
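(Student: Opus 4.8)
The plan is to run a quantum-ergodicity argument directly inside the single $\lambda$-eigenspace. Write $\mathcal E_\lambda=\{\xi\in\Z^d:|\xi|^2=\lambda\}$, so $N_\lambda=\#\mathcal E_\lambda$ and, with the volume normalized, $\{e^{i\langle\xi,x\rangle}\}_{\xi\in\mathcal E_\lambda}$ is an orthonormal basis of the eigenspace; any $L^2$-normalized eigenfunction is $\psi=\sum_{\xi\in\mathcal E_\lambda}c_\xi e^{i\langle\xi,x\rangle}$ with $\sum_\xi|c_\xi|^2=1$. For a ball $B=B(y,r)\subset\TT^d$ put $g_B=\mathbf 1_B/\vol(B)$. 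Expanding $|\psi|^2$ in a Fourier series and integrating,
\[
\frac1{\vol(B)}\int_B|\psi(x)|^2\,\dvol(x)-1
=\sum_{\substack{\xi,\eta\in\mathcal E_\lambda\\ \xi\neq\eta}}c_\xi\overline{c_\eta}\,\widehat{g_B}(\eta-\xi)
=\langle A_B\psi,\psi\rangle,
\]
where $A_B$ is the operator on the eigenspace whose matrix (in the above basis) is $\widehat{g_B}$ of the frequency differences off the diagonal, and $0$ on it. Since $\widehat{g_B}(m)=e^{-i\langle m,y\rangle}\widehat{g_{B(0,r)}}(m)$, the Hilbert--Schmidt norm $\mathcal H(r)^2:=\|A_B\|_{\mathrm{HS}}^2=\sum_{\xi\neq\eta}|\widehat{g_{B(0,r)}}(\xi-\eta)|^2$ depends only on $r$, and because $\{\psi_n\}_{\lambda_n=\lambda}$ is an orthonormal basis of the eigenspace we obtain, for every fixed ball $B$ of radius $r$, the mean-square bound $\sum_n|\langle A_B\psi_n,\psi_n\rangle|^2\le\mathcal H(r)^2$.

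The substance of the theorem is that $\mathcal H(r)^2=o(N_\lambda)$ when $r>\lambda^{-\frac1{2(d-1)}+\delta}$ for any fixed $\delta>0$. Using the Bessel bound $|\widehat{g_{B(0,r)}}(m)|\ll\min(1,(r|m|)^{-(d+1)/2})$ and splitting the sum dyadically in $|m|$, this reduces to a \emph{pair-correlation estimate} for the lattice points on the sphere: with $\rho(m)=\#\{(\xi,\eta)\in\mathcal E_\lambda^2:\xi-\eta=m\}$,
\[
R(T):=\#\{(\xi,\eta)\in\mathcal E_\lambda^2:0<|\xi-\eta|\le T\}=\sum_{0<|m|\le T}\rho(m)\;\ll\;\frac{N_\lambda^2}{\lambda^{(d-1)/2}}\,T^{d-1}\,\lambda^{o(1)},\qquad 1\le T\le 2\sqrt\lambda;
\]
informally, the lattice points on $S^{d-1}(\sqrt\lambda)$ do not cluster beyond what an equidistributed set would. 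Granting this, summing the dyadic contributions gives $\mathcal H(r)^2\ll N_\lambda^2\lambda^{-(d-1)/2}r^{-(d-1)}\lambda^{o(1)}$, which is $o(N_\lambda)$ precisely when $r^{d-1}\gg N_\lambda\lambda^{-(d-1)/2+o(1)}$. Feeding in the sizes of the eigenspaces --- $N_\lambda=\lambda^{1/2+o(1)}$ for $d=3$ with $\lambda\not\equiv 0,4,7\pmod 8$ (Gauss's three-squares theorem together with Siegel's class-number bound), and $\lambda\le N_\lambda\ll\lambda^{1+o(1)}$ for $d=4$ with $\lambda$ odd --- this is exactly the range $r>\lambda^{-\frac1{2(d-1)}+o(1)}$ claimed.

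Establishing the bound for $R(T)$ is the step I expect to be the main obstacle, and it is where ``results from the arithmetic theory of quadratic forms'' are used. I would count pairs through the level sets $|\xi-\eta|^2=2(\lambda-\langle\xi,\eta\rangle)=2k$: for fixed $\xi$ the admissible $\eta$ lie on the intersection of the spheres $|\eta|^2=\lambda$ and $|\eta-\xi|^2=2k$, i.e.\ on a $(d-2)$-dimensional sphere of radius $\asymp\sqrt{2k}$ contained in an affine rank-$(d-1)$ sublattice of $\Z^d$, and $R(T)=\sum_{1\le k\le T^2/2}P_k$. For $d=4$ this is a $2$-sphere: one may estimate $\sum_\xi$ of these counts by the circle method applied to the system $|\xi|^2=|\eta|^2=\lambda$ localized to $|\xi-\eta|\le T$, or, equivalently, exploit that for $\lambda$ odd $r_4$ is governed by Eisenstein series, so that $\mathcal E_\lambda$ equidistributes on $S^3(\sqrt\lambda)$ at the required scale $T/\sqrt\lambda$ with a power-saving rate (from a Kloosterman refinement, or from the quaternionic/Hecke structure of $\mathcal E_\lambda$), while $r_3(n)\ll n^{1/2+o(1)}$ bounds each individual $P_k$. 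For $d=3$ the relevant sphere is a circle, the circle method is no longer directly available, and one combines the divisor-type bound ($\ll\lambda^{o(1)}$ lattice points on a circle) with the equidistribution of lattice points on $S^2(\sqrt\lambda)$ in shrinking caps of radius $\gg T/\sqrt\lambda$ --- Linnik/Duke-type input, which is what forces $\lambda\not\equiv 0,4,7\pmod 8$. Checking that all of the resulting losses remain $\lambda^{o(1)}$ is the delicate part of the bookkeeping.

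Finally one must pass from a single ball to $\sup_{B\in\mathcal B_\lambda}$ and extract $S_\lambda$, as in the proof of Theorem~\ref{thm:small balls}. Since $\frac1{\vol B}\int_B|\psi_n|^2$ is monotone in $r$ and the symmetric difference of two balls with a common center and radii within a factor $1+\lambda^{-\delta}$, or with a common radius and centers within $\lambda^{-\delta}r$, has relative volume $O(\lambda^{-\delta})$, it suffices to control $|\langle A_B\psi_n,\psi_n\rangle|$ for $B$ ranging over a finite net $\mathcal N_\lambda$ of centers and radii, up to an additive error $O(\lambda^{-\delta})=o(1)$. Applying Chebyshev's inequality to the mean-square bound $\sum_n|\langle A_B\psi_n,\psi_n\rangle|^2\le\mathcal H(r)^2$ for each $B\in\mathcal N_\lambda$ and taking a union bound --- reinforced, where the net is too large for the saving in $\mathcal H(r)^2$ to absorb it outright, by a sharper maximal-function / restriction-estimate treatment of $\sup_{r\ge r_0}g_{B(0,r)}*(|\psi_n|^2-1)$ --- removes a subset of the basis of size $o(N_\lambda)$; what remains is the desired $S_\lambda$ with $|S_\lambda|=N_\lambda(1+o(1))$ on which $\sup_{B\in\mathcal B_\lambda}\bigl|\frac1{\vol B}\int_B|\psi_n|^2\,\dvol-1\bigr|=o(1)$.
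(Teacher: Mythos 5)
Your reduction of the problem to a pair-correlation estimate on $\mathcal E_\lambda$ is essentially the one the paper uses: with $\rho(\zeta)=\#\{\mu\in\mathcal E_\lambda : \mu+\zeta\in\mathcal E_\lambda\}$, the paper's key quantity $\sum_{\lambda-T^2/2\le t<\lambda}A_d(\lambda,t)$ is exactly your $R(T)=\sum_{0<|\zeta|\le T}\rho(\zeta)$, and the threshold exponents you derive match Theorem~\ref{thm:3dloc}. However there are two genuine gaps.

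First, the Hilbert--Schmidt / $L^2$-mean-square step does not deliver the \emph{uniform} supremum over all balls $B(y,r)\in\mathcal B_\lambda$ that the theorem asserts. The bound $\sum_n|\langle A_B\psi_n,\psi_n\rangle|^2\le\|A_B\|_{\mathrm{HS}}^2$ is for a single fixed $B$; to pass to $\sup_B$ you propose a Chebyshev plus union bound over a net of balls, but the net of \emph{centers} alone has size $\gtrsim (r\lambda^{-\delta})^{-d}$, which at $r\approx\lambda^{-1/(2(d-1))}$ swamps the saving $\mathcal H(r)^2/N_\lambda\approx\lambda^{-1/2+\theta(d-1)+o(1)}$: for $d=3$, for instance, the union bound closes only for $r\gg\lambda^{-1/10+o(1)}$, far from the claimed $\lambda^{-1/4+o(1)}$. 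The ``maximal-function'' parenthetical only gestures at the supremum over radii, not over centers, and does not resolve this. The paper avoids a net entirely by working with $L^1$ (not $L^2$) in the frequency variable: one majorizes/minorizes $\mathbf 1_{B(y,r)}$ by Beurling--Selberg trigonometric polynomials $b^\pm_{n,y}$ whose Fourier coefficients satisfy $|\widehat b^\pm_{n,y}(\zeta)|=|\widehat a^\pm_n(\zeta)|$ \emph{independently of $y$}, so that
$\sup_y\bigl|\langle(b^\pm_{n,y}-\widehat b^\pm_{n,y}(0))\psi_n,\psi_n\rangle\bigr|\le\sum_{1\le|\zeta|\le T}|\widehat a^\pm_n(\zeta)|\,|\langle e_\zeta\psi_n,\psi_n\rangle|$,
and then $\sum_{\lambda_n=\lambda}|\langle e_\zeta\psi_n,\psi_n\rangle|\le\rho(\zeta)$ (Lemma~\ref{lem:eigenspace}) gives the bound on the sum over the eigenspace directly, with no union bound. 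This is the $L^1$-discrepancy device $V_1^{\mathrm{loc}}$, and it is what makes the supremum over balls essentially free.

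Second, your sketch of the arithmetic input for $d=3$ is not strong enough. You suggest combining a ``$\ll\lambda^{o(1)}$ points on a circle'' bound with Linnik/Duke-type equidistribution in caps of radius $T/\sqrt\lambda$. At the relevant scale $T\approx\lambda^{1/4-o(1)}$ the cap has size $\approx\lambda^{-1/4}$, well below the small power-saving discrepancy that Duke's theorem provides, so equidistribution cannot be invoked there; and the divisor bound for lattice points on a circle inside a \emph{general} rank-$2$ sublattice of $\Z^3$ is not uniformly $\lambda^{o(1)}$ (there is a dependence on the structure of the lattice). What the paper actually uses (Lemma~\ref{lem:Abd}) is the pointwise bound $A_3(\lambda,t)\ll\gcd(\lambda,t)^{1/2}\lambda^{o(1)}$, which comes out of the exact Venkov/Pall formula for the number of representations of a binary quadratic form as a sum of three squares of linear forms; summing this over $t$ in the window (with care about the $\gcd$ factor) gives $R(T)\ll T^2\lambda^{o(1)}$. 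For $d=4$ the paper likewise uses the exact Pall--Taussky formula for $A_4(\lambda,t)$ (Lemma~\ref{lem:A4}), not a circle-method or Kloosterman-refinement input. So the arithmetic engine of the paper is quite different and more elementary than what you sketch, and crucially is strong enough at the scales at which Duke's theorem alone is not.
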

 Theorem~\ref{thm:3dloc} reaches the same scale $r>\lambda^{\frac{-1}{2(d-1)}+o(1)}$  as Theorem \ref{thm:small balls}.  
Moreover, it gives that the $L^2$ mass
of ``almost all" eigenfunctions in the $\lambda$-eigenspace equidistributes inside balls
of radii $r> \lambda^{\frac{-1}{2(d-1)}+o(1)}$
 whereas Theorem \ref{thm:small balls}
does not guarantee the existence of even one such eigenfunction. 
We believe that the analogue of Theorem~\ref{thm:3dloc} also holds in 
dimensions $d\geq 5$.

\begin{remark}\label{rem:Bourgain}
Bourgain (private communication) has pointed out that our result is sharp, in that for $d\geq 3$,  under the conditions on $\lambda$ of Theorem~\ref{thm:3dloc}, for radii $\lambda^{-1/2}<r<\lambda^{-\frac 1{2(d-1)}-o(1)}$ each $\lambda$-eigenspace has an  ONB for which a positive proportion of the eigenfunctions fail to equidistribute in $B(0,r)$, in fact for which 
$$\frac 1{\vol(B(0,r))} \int_{B(0,r)} |\psi_n(x)|^2 \, \dvol(x) \sim 0, \quad \lambda\to \infty\;.$$
 The construction is detailed in \S~\ref{sec:below}. 
\end{remark}

\subsection{Discrepancy} \label{subsec:disc}
Given an ONB $\{ \psi_n\}$ consisting of eigenfunctions of the Laplacian,  and  $a\in
C^\infty(\T^d)$, let
\begin{equation*}
V_2(a,\Lambda):=\frac 1{\#\{\lambda_n\leq \Lambda\}}\sum_{\lambda_n\leq \Lambda} \left|
\int_{\T^d} a(x) |\psi_n(x)|^2 \dvol(x)- 
  \int_{\T^d} a(x) \dvol(x)  \right|^2.
\end{equation*}
Here $\dvol(x)=dx/(2\pi)^d$ where $dx$ is Lebesgue measure.
 Marklof and Rudnick \cite{MR} showed
that $V_2(a,\Lambda)$
decays as $\Lambda\to \infty$. 
 This was done via arguing as in Schnirelman's theorem, and using Kronecker's theorem that generic
geodesics are uniformly distributed when projected to configuration
space; the point of \cite{MR} was that this argument extends to
rational polygons. Hezari and Rivi\`ere \cite{HRtor} arrive at their results on small scale equidistribution
  by  giving a quantitative rate of decay of $V_2(a,\Lambda)$.

We will derive Theorem~\ref{thm:small balls} from an upper bound on the $L^1$ discrepancy 
\begin{equation*}
V_1(a,\Lambda):=\frac 1{\#\{\lambda_n\leq \Lambda\}}\sum_{\lambda_n\leq \Lambda} \left|
\int_{\T^d}a(x)|\psi_n(x)|^2  \dvol(x)
- \int_{\T^d}a(x) \dvol(x)  \right|.
\end{equation*}   
For a fixed a trigonometric polynomial, we will show  that
\begin{equation}\label{bd on L1 dis}
V_1(a ,\Lambda)\ll_{a } \Lambda^{-1/2 } \;.
\end{equation}

Note that for chaotic systems, it is expected that the   $L^1$ discrepancy $V_1(a,\Lambda)$ is larger, of size about $\Lambda^{-1/4}$, see \cite{EFKAMM, FP} giving physical arguments for generic chaotic systems, and \cite{LS, KR} for rigorous results of this quality for the $L^2$ discrepancy in arithmetic settings, and \cite{Zelditchupper} for logarithmic upper bounds for the general negatively curved case (see also \cite{Schubert}).

\subsection{About the proofs} 
Our arguments rely upon lattice point
estimates in place dynamical properties of the geodesic flow.
In particular, the proof of the bound \eqref{bd on L1 dis}, given in Section~\ref{sec:small1},  combines harmonic analysis and a lattice point argument from the geometry of numbers (see Lemma  \ref{lem:geom numbers}).
The proof of Theorem~\ref{thm:3dloc} in Section~\ref{sec:indspace} replaces this lattice point count with 
a more refined statistic, which counts lattice points on a sphere
which lie within a small spherical cap (see Remark \ref{rem:caps}). 
To estimate this quantity, we require
deeper arithmetic information %of Pall \cite{Pa} 
on the number of representations of a  positive definite binary quadratic form by sums of squares of  linear forms.  This is also used in the construction of ``massive irregularities" in high dimensions in \S~\ref{sec:deep}. Bourgain's argument, which shows Theorem \ref{thm:small balls} reaches
the optimal scale, is detailed in \S~\ref{sec:below} and   also relies upon estimates for
the number lattice points within spherical caps.   
The construction of quantum irregularities in \S~\ref{sec:scar2} relies on more direct arguments.

\subsection{Notation}  
Throughout we use the notation,
$f(x) \ll g(x)$, by which we mean
$f(x)=O(g(x))$. In addition we write
$f(x) \gg g(x)$ provided there exists a $c>0$ such that
$|f(x)| \ge c g(x)$ for all $x$ under consideration, and, if $f(x) \ll g(x)$ and
$f(x) \gg g(x)$ we write $f(x)\approx g(x)$. 

 \subsection{Acknowledgments}
 We thank Jean Bourgain, Jon Keating, St\'ephane Nonnenmacher, Hamid Hezari, Gabriel Rivi\`ere, and Suresh Eswarathasan for their comments on an early version of the paper.  
The research leading to these results has received funding from the European
Research Council under the European Union's Seventh Framework
Programme (FP7/2007-2013) / ERC grant agreement n$^{\text{o}}$
320755.

\section{Small scale equidistribution}
\subsection{$L^1$ discrepancy on the torus} \label{sec:small1}

The goal of this section is to prove the upper bound  \eqref{bd on L1 dis} on the $L^1$ discrepancy.

On the torus $\mathbb T^d$ each eigenfunction $\psi_n$ of $-\Delta$ with eigenvalue $\lambda_n$ is of the following form
$$\psi_n=\sum_{\mu \in \mathbb Z^d : |\mu|^2=\lambda_n}c_n(\mu)e_\mu$$ 
where $e_\mu(x) = e^{\i \langle \mu, x\rangle}$. Throughout, we assume $\psi_n$ is $L^2$-normalized so that
\begin{equation} \notag
 \int_{\T^d} |\psi_n(x)|^2\dvol(x) =\sum_{|\mu|^2=\lambda_n}|c_n(\mu)|^2=1\;.
 \end{equation}

\begin{lemma}\label{two ONB}
For $\mu \in \mathbb Z^d$ such that $|\mu|^2=\lambda$ we have
$$\sum_{\lambda_n=\lambda} |c_n(\mu)|^2 = 1\;.$$
\end{lemma}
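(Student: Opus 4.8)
The plan is to recognize $c_n(\mu)$ as a Fourier coefficient and to apply Parseval's identity for the orthonormal basis $\{\psi_n\}$. First I would record that the exponentials $\{e_\mu\}_{\mu\in\Z^d}$ form an \emph{orthonormal} (not merely orthogonal) set in $\rum{\TT^d}$ with respect to $\dvol$, since $\inprod{e_\mu}{e_\nu}=\int_{\TT^d}e^{\i\langle\mu-\nu,x\rangle}\dvol(x)$ equals $1$ if $\mu=\nu$ and $0$ otherwise, the normalized volume having total mass $1$. Consequently, reading off coefficients in $\psi_n=\sum_{|\mu|^2=\lambda_n}c_n(\mu)e_\mu$ gives $c_n(\mu)=\inprod{\psi_n}{e_\mu}$ when $|\mu|^2=\lambda_n$. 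Moreover $\inprod{\psi_n}{e_\mu}=0$ whenever $|\mu|^2\neq\lambda_n$: indeed $e_\mu$ is itself an eigenfunction of $-\Delta$ with eigenvalue $|\mu|^2$, and eigenfunctions of the self-adjoint operator $-\Delta$ belonging to distinct eigenvalues are orthogonal. Hence $\abs{\inprod{e_\mu}{\psi_n}}^2=\abs{c_n(\mu)}^2$ if $\lambda_n=|\mu|^2=\lambda$, and vanishes otherwise.

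Then, fixing $\mu$ with $|\mu|^2=\lambda$ and applying Parseval's identity to $e_\mu\in\rum{\TT^d}$ relative to the orthonormal basis $\{\psi_n\}$,
$$
1=\norm{e_\mu}^2=\sum_{n}\abs{\inprod{e_\mu}{\psi_n}}^2=\sum_{\lambda_n=\lambda}\abs{c_n(\mu)}^2,
$$
which is exactly the assertion. Equivalently, the matrix $\bigl(c_n(\mu)\bigr)_{\lambda_n=\lambda,\ |\mu|^2=\lambda}$ is the unitary change of basis between the two orthonormal bases $\{e_\mu\}_{|\mu|^2=\lambda}$ and $\{\psi_n\}_{\lambda_n=\lambda}$ of the $\lambda$-eigenspace; the $L^2$-normalization of the $\psi_n$ says its rows have unit norm, and the lemma is the dual statement that its columns do as well.

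There is essentially no serious obstacle here. The only points needing (minor) care are the normalization of $\dvol$, which is what makes the $e_\mu$ genuinely orthonormal, and the orthogonality of $\Delta$-eigenspaces for distinct eigenvalues, which is what lets one discard all terms with $\lambda_n\neq\lambda$ from the Parseval sum and thereby pass from a sum over all $n$ to a sum over the finite set $\{n:\lambda_n=\lambda\}$.
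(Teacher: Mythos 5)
Your proof is correct and takes essentially the same route as the paper's: both recognize $\{e_\mu\}_{|\mu|^2=\lambda}$ and $\{\psi_n\}_{\lambda_n=\lambda}$ as two orthonormal bases of the same $\lambda$-eigenspace and apply Parseval's identity to $e_\mu$ with respect to the basis $\{\psi_n\}$. The only cosmetic difference is that the paper works entirely inside the $\lambda$-eigenspace from the start (so the restriction of the sum to $\lambda_n=\lambda$ is automatic), whereas you expand over the full basis of $L^2(\TT^d)$ and then invoke orthogonality of distinct $\Delta$-eigenspaces to drop the extraneous terms; the two are equivalent.
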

\begin{proof}
The functions $\{\psi_n:\lambda_n=\lambda\}$ and $\{e_\mu:|\mu|^2=\lambda\}$ are both orthonormal bases of the $\lambda$-eigenspace of $-\Delta$, with respect to the inner product 
$$ \langle f,g \rangle = \int_{\T^d}f(x)\overline{g(x)}\dvol(x) \;.$$
Hence in the expansion
$$ \psi_n = \sum_{|\mu|^2=\lambda} \langle \psi_n,e_\mu\rangle e_\mu$$
we have 
$$ \langle \psi_n,e_\mu\rangle= c_n(\mu) $$
and hence  the expansion of $e_\mu$ is  
$$ e_\mu  = \sum_{\lambda_n=\lambda} \langle e_\mu,\psi_n\rangle \psi_n =  \sum_{\lambda_n=\lambda} \overline{c_n(\mu)}\psi_n$$
and therefore 
$$
\sum_{\lambda_n=\lambda} |c_n(\mu)|^2 = \sum_{\lambda_n=\lambda} | \langle e_\mu,\psi_n\rangle |^2 = \langle e_\mu,e_\mu \rangle = 1\;.
$$
\end{proof}

\begin{lemma}\label{lem:eigenspace} 
If $|\zeta|\leq 2\sqrt{\lambda}$ then 
$$\sum_{\lambda_n=\lambda} \left| \int_{\mathbb T^d} e_\zeta(x)|\psi_n(x)|^2 \dvol(x) \right| \leq \#\{\mu\in \Z^d:|\mu|^2=\lambda=|\mu+\zeta|^2\} \;.
$$
If $|\zeta|>2\sqrt{\lambda}$ then each summand is zero. 
\end{lemma}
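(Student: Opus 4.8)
The plan is to expand the square $|\psi_n(x)|^2$ in the Fourier basis and integrate term by term. Writing $\psi_n=\sum_{|\mu|^2=\lambda}c_n(\mu)e_\mu$ with $e_\mu(x)=e^{\i\langle\mu,x\rangle}$, one has
$$|\psi_n(x)|^2=\sum_{|\mu|^2=\lambda}\ \sum_{|\nu|^2=\lambda}c_n(\mu)\overline{c_n(\nu)}\,e_{\mu-\nu}(x),$$
and since $\int_{\TT^d}e_\xi(x)\,\dvol(x)$ equals $1$ when $\xi=0$ and $0$ otherwise, integrating against $e_\zeta$ kills every term except those with $\nu=\mu+\zeta$, giving
$$\int_{\TT^d}e_\zeta(x)\,|\psi_n(x)|^2\,\dvol(x)=\sum_{\substack{\mu\in\Z^d\\ |\mu|^2=\lambda=|\mu+\zeta|^2}}c_n(\mu)\,\overline{c_n(\mu+\zeta)}.$$
The second assertion is then immediate: if a single $\mu$ contributes to this sum then $|\zeta|=|(\mu+\zeta)-\mu|\le|\mu|+|\mu+\zeta|=2\sqrt\lambda$, so when $|\zeta|>2\sqrt\lambda$ the sum is empty and each integral vanishes identically.

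For the first assertion I would take absolute values, apply the triangle inequality, and interchange the two finite sums (over $n$ with $\lambda_n=\lambda$, and over admissible $\mu$):
$$\sum_{\lambda_n=\lambda}\left|\int_{\TT^d}e_\zeta(x)\,|\psi_n(x)|^2\,\dvol(x)\right|\ \le\ \sum_{\substack{\mu\in\Z^d\\ |\mu|^2=\lambda=|\mu+\zeta|^2}}\ \ \sum_{\lambda_n=\lambda}|c_n(\mu)|\,|c_n(\mu+\zeta)|.$$
Then I apply $|c_n(\mu)|\,|c_n(\mu+\zeta)|\le\tfrac12\big(|c_n(\mu)|^2+|c_n(\mu+\zeta)|^2\big)$ together with Lemma~\ref{two ONB}, which (using $|\mu+\zeta|^2=\lambda$) gives $\sum_{\lambda_n=\lambda}|c_n(\mu)|^2=\sum_{\lambda_n=\lambda}|c_n(\mu+\zeta)|^2=1$; hence the inner sum over $n$ is at most $1$ for every admissible $\mu$. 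Summing the bound $1$ over the admissible $\mu$ yields exactly $\#\{\mu\in\Z^d:|\mu|^2=\lambda=|\mu+\zeta|^2\}$, as claimed.

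There is no genuine obstacle here; the argument is a direct Fourier expansion combined with AM--GM and the orthogonality identity of Lemma~\ref{two ONB}. The only point worth emphasizing is that the estimate is efficient precisely because AM--GM and Lemma~\ref{two ONB} contribute exactly $1$ per lattice point $\mu$ lying on the intersection of the two spheres $|\mu|^2=\lambda$ and $|\mu+\zeta|^2=\lambda$, so the whole discrepancy bound is reduced to counting such lattice points --- the quantity that the geometry-of-numbers input (Lemma~\ref{lem:geom numbers}) will later control.
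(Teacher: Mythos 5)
Your proof is correct and follows essentially the same route as the paper: expand $|\psi_n|^2$ in Fourier, identify the surviving terms $\nu=\mu+\zeta$, apply the triangle inequality and AM--GM, swap the two finite sums, and invoke Lemma~\ref{two ONB} to reduce each inner sum to $1$. The argument for the vanishing when $|\zeta|>2\sqrt\lambda$ also coincides with the paper's.
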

\begin{proof}
Expand $\psi_n$ to get  
\begin{equation*}
\begin{split}
\left| \int_{\TT^d} e_\zeta(x)|\psi_n(x)|^2 \dvol(x) \right| &= \left|
\sum_{|\mu|^2=\lambda_n=|\mu+\zeta|^2} c_n(\mu)\overline{c_n(\mu+\zeta)} \right| \\
&\leq \sum_{|\mu|^2=\lambda_n=|\mu+\zeta|^2} \frac 12|c_n(\mu)|^2 + \frac 12 |c_n(\mu+\zeta)|^2.
\end{split}
\end{equation*}
Hence,
\begin{equation*}
\begin{split}
\sum_{\lambda_n=\lambda} \left|\int_{\TT^d} e_\zeta(x)|\psi_n(x)|^2 \dvol(x)\right|
&\leq \sum_{\lambda_n=\lambda} \sum_{|\mu|^2=\lambda_n=|\mu+\zeta|^2} \frac 12|c_n(\mu)|^2 + \frac 12 |c_n(\mu+\zeta)|^2\\
&= \sum_{|\mu|^2=\lambda=|\mu+\zeta|^2} \frac 12 \sum_{\lambda_n=\lambda} |c_n(\mu)|^2 + \frac 12 \sum_{\lambda_n=\lambda} |c_n(\mu+\zeta)|^2\\
&= \sum_{|\mu|^2=\lambda=|\mu+\zeta|^2} \frac 12 + \frac 12
\end{split}
\end{equation*}
since  by Lemma~\ref{two ONB} both inner sums equal one. Hence, 
$$
\sum_{\lambda_n=\lambda} \left|\int_{\TT^d} e_\zeta(x)|\psi_n(x)|^2 \dvol(x)\right|
\leq\#\{\mu\in \Z^d:|\mu|^2=\lambda=|\mu+\zeta|^2\}\;.
$$ 
Now if $|\zeta|>2\sqrt{\lambda}$ then there is no (real) solution of $|\mu|^2=\lambda=|\mu+\zeta|^2$ and hence all terms above vanish. 
\end{proof}

\begin{lemma}\label{lem:geom numbers}
For a nonzero integer vector $\zeta\in \Z^d$ write $\zeta = m\widehat \zeta$, with $m\geq 1$ and $\widehat\zeta$ a primitive integer vector. 
If $0<|\zeta|\leq 2\sqrt{X}$  then 
$$\#\{\mu\in \Z^d: |\mu|^2\leq X, |\mu|^2=|\mu+\zeta|^2 \} \ll \frac{(\sqrt{X})^{d-1}} 
{|\widehat\zeta|} \; ,
$$
while for $|\zeta|>2\sqrt{X}$, the set above is empty. 
\end{lemma}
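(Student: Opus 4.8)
The plan is to reinterpret the defining equation geometrically and then carry out a geometry-of-numbers count. Expanding $|\mu|^2 = |\mu+\zeta|^2$ gives $2\langle\mu,\zeta\rangle + |\zeta|^2 = 0$, so any solution $\mu$ lies on the affine hyperplane $P_\zeta = \{x\in\R^d : \langle x,\zeta\rangle = -|\zeta|^2/2\}$, which is orthogonal to $\zeta$ and lies at distance $|\zeta|/2$ from the origin. If $|\zeta| > 2\sqrt X$, then $P_\zeta$ is disjoint from the Euclidean ball $\{|x|\le\sqrt X\}$ and there are no solutions of size $\le\sqrt X$; this settles the last assertion. So assume $0 < |\zeta|\le 2\sqrt X$, and (the bound being trivial otherwise) that some solution $\mu_0$ with $|\mu_0|^2\le X$ exists. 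Then the set of solutions of size $\le\sqrt X$ equals $(\mu_0+\Lambda)\cap\{|x|\le\sqrt X\}$, where $\Lambda := \Z^d\cap\zeta^\perp = \Z^d\cap\widehat\zeta^{\perp}$ is a lattice of rank $d-1$ inside the hyperplane $\zeta^\perp$. Since $P_\zeta\cap\{|x|\le\sqrt X\}$ is a $(d-1)$-dimensional ball of radius $\rho := (X - |\zeta|^2/4)^{1/2}\le\sqrt X$, the task becomes: bound the number of points of a coset of $\Lambda$ in a $(d-1)$-ball of radius at most $\sqrt X$.

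The second step is to compute the covolume of $\Lambda$ inside the Euclidean space $\zeta^\perp\cong\R^{d-1}$; this is the only place the primitivity of $\widehat\zeta$ is used. The map $\Z^d\to\Z$, $x\mapsto\langle x,\widehat\zeta\rangle$, has image $\Z$ (the coordinates of $\widehat\zeta$ are coprime) and kernel $\Lambda$, so picking $v\in\Z^d$ with $\langle v,\widehat\zeta\rangle=1$ gives $\Z^d = \Lambda\oplus\Z v$; writing $\operatorname{covol}(\Z^d) = \operatorname{covol}(\Lambda)\cdot\operatorname{dist}(v,\zeta^\perp)$ and noting $\operatorname{dist}(v,\zeta^\perp) = |\langle v,\widehat\zeta\rangle|/|\widehat\zeta| = 1/|\widehat\zeta|$, one obtains $\operatorname{covol}(\Lambda) = |\widehat\zeta|$.

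The last step is the count. Let $1\le\lambda_1\le\cdots\le\lambda_{d-1}$ be the successive minima of $\Lambda$; they are all $\ge 1$ because $\Lambda\subseteq\Z^d$. I would invoke the standard geometry-of-numbers estimate $\#\big((\Lambda+t)\cap B_R\big)\ll_d\prod_{i=1}^{d-1}(1 + R/\lambda_i)$, valid for any translate and any radius $R$, with $R=\rho\le\sqrt X$. Expanding into $2^{d-1}$ terms $\rho^{|S|}/\prod_{i\in S}\lambda_i$ and using $\lambda_i\ge 1$, the bound $\prod_i\lambda_i\ge\operatorname{covol}(\Lambda)=|\widehat\zeta|$ (Hadamard's inequality together with the fact that a sublattice has covolume no smaller than that of the lattice containing it), and $|\widehat\zeta|\le|\zeta|\le 2\sqrt X$, each term is $\ll_d (\sqrt X)^{d-1}/|\widehat\zeta|$: the full term is $\le\rho^{d-1}/|\widehat\zeta|\le(\sqrt X)^{d-1}/|\widehat\zeta|$, while every proper-subset term is $\le\rho^{d-2}\le(\sqrt X)^{d-2}\le 2(\sqrt X)^{d-1}/|\widehat\zeta|$. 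Summing the $2^{d-1}$ terms yields the claimed estimate.

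The main obstacle is this final count. A naive packing argument using only the shortest vector of $\Lambda$ yields $\#\ll(\sqrt X)^{d-1}$ and completely loses the factor $|\widehat\zeta|^{-1}$, which is precisely the content of the lemma. One must use the refined count with all successive minima, and crucially exploit \emph{both} that $\operatorname{covol}(\Lambda)$ is as large as $|\widehat\zeta|$ \emph{and} that $\Lambda$ has no short vectors (all $\lambda_i\ge 1$), so that the thin directions of $\Lambda$ cannot contribute too many points; it is the combination of these facts with $|\widehat\zeta|\le 2\sqrt X$ that delivers the stated bound.
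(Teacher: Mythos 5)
Your argument is correct and follows the same geometric route as the paper's: rewrite the equation as $2\langle\mu,\zeta\rangle=-|\zeta|^2$, observe that solutions lie in a fixed coset of the rank-$(d-1)$ lattice $\Lambda=\Z^d\cap\zeta^{\perp}$, compute $\operatorname{covol}(\Lambda)=|\widehat\zeta|$, and then count lattice points of $\Lambda$ (or a translate) in a ball of radius $O(\sqrt X)$. The one place you diverge is the final count: the paper simply cites a result of Schmidt (\cite{Sc}) for the asymptotic $c_d(2\sqrt X)^{d-1}/|\widehat\zeta|+O((\sqrt X)^{d-2})$, whereas you give a self-contained elementary argument via the successive-minima bound $\#\bigl((\Lambda+t)\cap B_R\bigr)\ll_d\prod_i\bigl(1+R/\lambda_i\bigr)$, the estimate $\prod_i\lambda_i\ge\operatorname{covol}(\Lambda)$ (Hadamard applied to the sublattice spanned by minimizing vectors), and $\lambda_i\ge 1$. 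That substitute is valid and arguably more transparent; it trades the asymptotic for a cruder upper bound, which is all the lemma needs.

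One small hole to patch: the inequality $\rho^{|S|}\le\rho^{d-2}$ used to bound the proper-subset terms requires $\rho\ge 1$. When $\rho<1$, all $\lambda_i\ge 1>\rho$, so the ball $B_\rho$ about any center contains at most one point of the coset $\mu_0+\Lambda$; then the count is $\le 1\ll(\sqrt X)^{d-1}/|\widehat\zeta|$ because $|\widehat\zeta|\le|\zeta|\le 2\sqrt X$ (and the case $\sqrt X<1$ is vacuous, since then $\mu=0$ is the only candidate and gives $|\zeta|=0$). Saying this explicitly closes the argument. The paper also notes in passing that $|\zeta|^2$ must be even for any solution to exist; this is inessential for the upper bound, so omitting it is fine.
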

\begin{proof}
Suppose we have a solution $\mu \in \mathbb Z^d$ with $|\mu+\zeta|=|\mu| \le \sqrt{X}$ then
 $|\zeta|\leq |\mu+\zeta|+|\mu| \leq 2\sqrt{X}$ and hence if $|\zeta|>2\sqrt{X}$ then there are no solutions. So from now on assume $|\zeta|\leq 2\sqrt{X}$.  

The equality $|\mu|^2 = |\mu+\zeta|^2$ is equivalent to 
\begin{equation} \label{eq:equalityequiv}
2\langle \mu,\zeta \rangle = - |\zeta|^2
\end{equation}
which only has solutions if $|\zeta|^2$ is even. 

If there are no solutions to \eqref{eq:equalityequiv} with $|\mu|\leq \sqrt{X}$, then we are done. Otherwise, there exists a solution 
$\mu_0$ and any other such solution satisfies 
\begin{equation*}
\langle \mu-\mu_0,\zeta \rangle =0, \quad |\mu-\mu_0|\leq 2\sqrt{X}\;.
\end{equation*}
We see that the number of solutions $|\mu|\leq \sqrt{X}$ to \eqref{eq:equalityequiv} is bounded by the number of $\nu\in \Z^d$ such that 
\begin{equation*}
\langle \nu, \zeta \rangle = 0,\quad |\nu|\leq 2\sqrt{X}\;.
\end{equation*}
That is, we are counting lattice points in the $(d-1)$-dimensional sub-lattice which is the ortho-complement of $\zeta$,  which lie in a ball. The co-volume (discriminant) of this sub-lattice 
is $|\widehat\zeta|$, where $\zeta = m\widehat \zeta$ with $\widehat \zeta$ primitive, $m\geq 1$ integer, and by \cite[Section 2]{Sc}  the number of such integer solutions is 
\begin{equation*} \label{eq:latticeptcount}
 c_d \frac{(2\sqrt{X})^{d-1} }{|\widehat\zeta|} + O((\sqrt{X})^{d-2})\ll \frac{X^{(d-1)/2}}
{|\widehat\zeta|},
\end{equation*}
since $|\widehat\zeta|\leq |\zeta|\ll \sqrt{X}$.  Here $c_d=\frac{\pi^{d/2}}{\Gamma(\frac{d}{2}+1)}$ is the volume of the $d$-dimensional unit ball in $\mathbb R^d$. 
\end{proof}

\begin{proposition} \label{prop:upperbd}
If $|\zeta|\leq 2\sqrt{\Lambda}$ then 
$$V_1(e_\zeta,\Lambda)\ll \frac{\Lambda^{-1/2}}{|\widehat \zeta|}\;.
$$
If $|\zeta|>2\sqrt{\Lambda}$ then $V_1(e_\zeta,\Lambda)=0$. 
\end{proposition}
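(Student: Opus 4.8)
The plan is to combine Lemma~\ref{lem:eigenspace} with Lemma~\ref{lem:geom numbers} and then sum over eigenvalues. First I would recall the definition
\[
V_1(e_\zeta,\Lambda)=\frac{1}{\#\{\lambda_n\leq \Lambda\}}\sum_{\lambda_n\leq \Lambda}\left|\int_{\T^d}e_\zeta(x)|\psi_n(x)|^2\dvol(x)-\int_{\T^d}e_\zeta(x)\dvol(x)\right|.
\]
Since $\zeta\neq 0$, the mean term $\int_{\T^d}e_\zeta\dvol=0$ vanishes, so we are left with $\sum_{\lambda_n\le\Lambda}\bigl|\int e_\zeta|\psi_n|^2\bigr|$. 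I would group this sum according to the value $\lambda=\lambda_n$ of the eigenvalue, writing it as $\sum_{\lambda\le\Lambda}\sum_{\lambda_n=\lambda}\bigl|\int e_\zeta|\psi_n|^2\dvol\bigr|$. Note that the eigenvalues $\lambda$ occurring are exactly the integers representable as sums of $d$ squares; for such $\lambda$ the inner sum over $\lambda_n=\lambda$ runs over an ONB of the $\lambda$-eigenspace.

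Next I would apply Lemma~\ref{lem:eigenspace}: for each $\lambda$ (with $|\zeta|\le 2\sqrt\lambda$, which is implied once $\lambda\le\Lambda$ and $|\zeta|\le 2\sqrt\Lambda$ — actually one must be slightly careful here, see below) the inner sum is bounded by $\#\{\mu\in\Z^d:|\mu|^2=\lambda=|\mu+\zeta|^2\}$, and it is $0$ when $|\zeta|>2\sqrt\lambda$. Summing the right-hand side over all $\lambda\le\Lambda$ telescopes into a single counting problem:
\[
\sum_{\lambda\le\Lambda}\#\{\mu\in\Z^d:|\mu|^2=\lambda=|\mu+\zeta|^2\}=\#\{\mu\in\Z^d:|\mu|^2\le\Lambda,\ |\mu|^2=|\mu+\zeta|^2\},
\]
because every $\mu$ with $|\mu|^2=|\mu+\zeta|^2\le\Lambda$ is counted exactly once, in the class $\lambda=|\mu|^2$. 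Now Lemma~\ref{lem:geom numbers} with $X=\Lambda$ bounds this by $\ll \Lambda^{(d-1)/2}/|\widehat\zeta|$ when $0<|\zeta|\le 2\sqrt\Lambda$, and gives an empty set when $|\zeta|>2\sqrt\Lambda$.

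Finally I would divide by $\#\{\lambda_n\le\Lambda\}$. By Weyl's law (or simply counting lattice points in a ball of radius $\sqrt\Lambda$) we have $\#\{\lambda_n\le\Lambda\}=\#\{\mu\in\Z^d:|\mu|^2\le\Lambda\}\asymp \Lambda^{d/2}$. Hence
\[
V_1(e_\zeta,\Lambda)\ll\frac{\Lambda^{(d-1)/2}/|\widehat\zeta|}{\Lambda^{d/2}}=\frac{\Lambda^{-1/2}}{|\widehat\zeta|},
\]
and for $|\zeta|>2\sqrt\Lambda$ every term vanishes so $V_1(e_\zeta,\Lambda)=0$. The only genuinely delicate point is the compatibility of the hypotheses: when $|\zeta|\le 2\sqrt\Lambda$ but $|\zeta|>2\sqrt\lambda$ for some small $\lambda\le\Lambda$, Lemma~\ref{lem:eigenspace} already tells us those summands are zero, so restricting the sum to $\lambda\ge|\zeta|^2/4$ costs nothing and the telescoping above is still valid — this is the one bookkeeping step I would make explicit rather than the routine Weyl-law and divide-through computations.
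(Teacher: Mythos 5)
Your proof is correct and follows exactly the same route as the paper: group by eigenvalue, apply Lemma~\ref{lem:eigenspace} to each eigenspace, telescope the resulting double sum into a single lattice-point count, bound it via Lemma~\ref{lem:geom numbers}, and divide by the Weyl count. The bookkeeping you flag at the end (small $\lambda$ with $|\zeta|>2\sqrt\lambda$ contributing zero) is handled implicitly in the paper by the second clause of Lemma~\ref{lem:eigenspace}; your making it explicit is fine.
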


\begin{proof}
By Lemma~\ref{lem:eigenspace}, 
\begin{equation*}
\begin{split}
V_1(e_\zeta,\Lambda) &= \frac 1{\#\{\lambda_n\leq \Lambda\}}\sum_{\lambda\leq \Lambda} \sum_{\lambda_n=\lambda}
 \left| \int_{\mathbb T^d} e_\zeta(x)|\psi_n(x)|^2  \dvol(x) \right| \\
 &  \leq \frac 1{\#\{\lambda_n\leq \Lambda\}}\sum_{\lambda\leq \Lambda}\#\{\mu\in \Z^d:|\mu|^2=\lambda=|\mu+\zeta|^2\} \\
 &= 
 \frac 1{\#\{\lambda_n\leq \Lambda\}}\#\{|\mu|\leq \sqrt{\Lambda}:|\mu|^2 = |\mu+\zeta|^2\}\;.
\end{split}
\end{equation*} 
The denominator is $\#\{\lambda_n\leq \Lambda\}\approx \Lambda^{d/2}$ (Weyl's law, which follows from an elementary argument since $\#\{\lambda_n\leq \Lambda\}=\#\{ \mu \in \mathbb Z^d : |\mu|^2 \le \Lambda \}$),  %\marginpar{added a comment on the estimation of $N(\Lambda)$, perhaps this is unnecessary?}
 while by Lemma~\ref{lem:geom numbers}, the numerator is $\ll (\sqrt{\Lambda})^{d-1}/|\widehat \zeta|$, which gives the claim. 
\end{proof}
Note that the upper bound  \eqref{bd on L1 dis} on the $L^1$ discrepancy  $V_1(a,\Lambda) $ for a general trigonometric polynomial follows from Proposition~\ref{prop:upperbd}.

\subsection{Proof of Theorem~\ref{thm:small balls}}

We will need majorants and minorants for the indicator function of a ball. 
We now cite Lemma 4 of Harman \cite{Harman} (see also the work of Holt \cite{Holt} and Holt and Vaaler \cite{HoltVaaler}), which constructs an appropriate version of Beurling-Selberg polynomials:
\begin{lemma} \label{lem:BSfns}
Let $B(0,r)\subset \T^d$ be the ball of radius $r$ around the origin. Let $T,r>0$ with $Tr \ge 1$.
There exist trigonometric polynomials
$a^{\pm}$ such that:
\begin{itemize}
\item[i)] $a^{-}(x) \le \mathbf 1_{B(0,r)}(x) \le a^{+}(x)$;
\item[ii)] $ \widehat a^{\pm}(\zeta)=0$ if $|\zeta|\ge T$;
\item[iii)] $\widehat a^{\pm}(0)=\tmop{vol}(B_d(0,r))+O_d(r^{d-1}/T)$;
\item[iv)] $|\widehat a(\zeta)| \ll_d r^d$.
\end{itemize}
\end{lemma}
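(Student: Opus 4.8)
As Lemma~\ref{lem:BSfns} is quoted from \cite{Harman}, I will instead sketch how one constructs such multidimensional Beurling--Selberg majorants and minorants for a ball, in the spirit of \cite{Holt, HoltVaaler}. The plan has three stages: reduce the periodic problem to one on $\R^{d}$; reduce the $d$-dimensional problem to a one-variable extremal problem via radial symmetry and the Hankel transform; and solve that one-dimensional problem by adapting Selberg's classical construction. For the first stage it suffices to produce $F^{\pm}\in L^{1}(\R^{d})$ whose Euclidean Fourier transforms vanish on $\{|\xi|\ge T\}$, with $F^{-}(x)\le\mathbf 1_{B(0,r)}(x)\le F^{+}(x)$ for all $x\in\R^{d}$, with $\int_{\R^{d}}F^{\pm}=\vol(B_{d}(0,r))+O(r^{d-1}/T)$ and $\|F^{\pm}\|_{1}\ll r^{d}$, and with $F^{+}\ge0$ everywhere (a property of the standard majorant; $F^{-}\le0$ off $B(0,r)$ then holds automatically since $F^{-}\le\mathbf 1_{B(0,r)}$). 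Putting $a^{\pm}(x):=\sum_{k\in\Z^{d}}F^{\pm}(x+2\pi k)$ and applying Poisson summation, $\widehat{a^{\pm}}(\zeta)=(2\pi)^{-d}\widehat{F^{\pm}}(\zeta)$ for $\zeta\in\Z^{d}$, so $a^{\pm}$ is a trigonometric polynomial obeying (ii); then (iii) and (iv) follow from $\widehat{a^{\pm}}(0)=(2\pi)^{-d}\int F^{\pm}$ and $|\widehat{a^{\pm}}(\zeta)|\le(2\pi)^{-d}\|F^{\pm}\|_{1}$ together with $Tr\ge1$ (so that $r^{d-1}/T\le r^{d}$), while the sign conditions guarantee the one-sided bounds survive the wrapping when $r<\pi$; the remaining range $\pi\le r\le\operatorname{diam}\TT^{d}$ is trivial.

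For the second and third stages, averaging $F^{\pm}$ over $SO(d)$ preserves band-limitedness, the integral, and both inequalities (the ball being radial), so I may assume $F^{\pm}(x)=g^{\pm}(|x|)$. The $d$-dimensional Fourier transform of a radial function is, up to normalization, its Hankel transform of order $\nu=d/2-1$, so $\widehat{F^{\pm}}$ is supported in $\{|\xi|<T\}$ precisely when the order-$\nu$ Hankel transform of $g^{\pm}$ is supported in $[0,T)$; one is thus reduced to finding $g^{\pm}$ on $[0,\infty)$ with $g^{-}\le\mathbf 1_{[0,r]}\le g^{+}$, Hankel spectrum in $[0,T)$, and $\int_{0}^{\infty}\bigl(g^{\pm}(s)-\mathbf 1_{[0,r]}(s)\bigr)s^{d-1}\,ds\ll r^{d-1}/T$ (so that also $\int_{0}^{\infty}g^{\pm}(s)\,s^{d-1}\,ds\asymp r^{d}$). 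This is a Beurling--Selberg extremal problem in the de Branges space attached to the Bessel kernel $J_{\nu}$: one builds the majorant and minorant by interpolating $\mathbf 1_{[0,r]}$, and its suitable one-sided modifications, at the zeros of $s\mapsto J_{\nu}(Ts)$, just as Selberg's classical functions interpolate at the integers. The zeros with $s\asymp r$ are spaced $\asymp 1/T$ --- which is where the hypothesis $Tr\ge1$ is used --- and the mass defect at each node is of order $(\text{spacing})\times(\text{local weight }s^{d-1})\asymp r^{d-1}/T$, which yields (iii). (Holt and Vaaler perform this by an explicit construction tailored to the ball; Littmann and Carneiro--Littmann later developed the general de Branges framework that produces the same functions.)

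The main obstacle is exactly this extremal construction. The tempting elementary route --- convolving the indicator of a slightly enlarged (respectively shrunk) ball with a nonnegative band-limited approximate identity concentrated at scale $1/T$ --- fails, because a band-limited function cannot have compact spatial support, so the convolution only approaches $\mathbf 1_{B(0,r)}$ within an error $\asymp 1/(Tr)$ rather than lying cleanly on the required side of it. Forcing an \emph{exact} one-sided inequality while keeping the mass defect down to the sharp order $r^{d-1}/T$ is what compels the interpolation at the Bessel zeros, and the delicate point is to verify that this order is genuinely achieved, and not inflated by a factor such as $\log(Tr)$.
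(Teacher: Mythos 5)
The paper does not prove Lemma~\ref{lem:BSfns}: it imports it verbatim as Lemma~4 of Harman \cite{Harman} (which in turn rests on Holt \cite{Holt} and Holt--Vaaler \cite{HoltVaaler}), so there is no in-paper argument to compare against. Judged on its own terms, your sketch correctly identifies the architecture of that construction: periodize Euclidean majorants/minorants via Poisson summation so that band-limitedness, the zeroth coefficient, and the $L^1$ bound descend to $\TT^d$ (and your sign observations --- $F^{+}\ge 0$ everywhere, $F^{-}\le 0$ off $B(0,r)$ --- are precisely what make the wrapping preserve the one-sided inequalities when $r<\pi$); reduce to radial $F^{\pm}$ by averaging over $SO(d)$; and pose a one-dimensional Beurling--Selberg extremal problem for the order-$(d/2-1)$ Hankel transform, solved by interpolating $\mathbf 1_{[0,r]}$ at the zeros of $J_{d/2-1}(T\cdot)$. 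Your derivation of (ii)--(iv) from the Euclidean data together with $Tr\ge1$ is also sound; for the minorant one should note explicitly that $\|F^{-}\|_{1}\le\|\mathbf 1_{B(0,r)}\|_{1}+\|\mathbf 1_{B(0,r)}-F^{-}\|_{1}\ll r^{d}$, using that $\mathbf 1_{B(0,r)}-F^{-}\ge0$ has integral $O(r^{d-1}/T)\le O(r^{d})$.

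The one real gap is the one you flag yourself: the core extremal step --- existence of band-limited radial $g^{\pm}$ with exact one-sided inequalities and total $L^{1}$-defect $O_{d}(r^{d-1}/T)$, with no spurious $\log(Tr)$ --- is described but not established, and your ``mass defect at each node'' heuristic is not a substitute for the actual Holt--Vaaler computation (the defect is a genuine $L^{1}$ quantity concentrated in a shell of width $\asymp 1/T$ around $\lvert x\rvert=r$, not a per-node count; getting the clean $O(r^{d-1}/T)$ is exactly the point of their interpolation argument). Two minor caveats: the range $r\ge\pi$ is dismissed as trivial, which is fine for $r\ge\pi\sqrt d$ but is not quite immediate for $\pi\le r<\pi\sqrt d$ (harmless, since the paper only uses $r\to0$); and the $SO(d)$-averaging step, while correct, is unnecessary in practice since Holt--Vaaler construct radial $F^{\pm}$ directly. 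Since the paper itself delegates all of this to the references, your proposal is at parity with the paper rather than in conflict with it; producing a self-contained proof would require reproducing the Harman/Holt--Vaaler construction in full.
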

\begin{proof}[Proof of Theorem \ref{thm:small balls}]
Let 
$$
\mathcal B_n=\left\{ B(y,r) \subset \mathbb T^d : r> \lambda_n^{-\theta_1}\right\}
$$
with $\theta_1$ to be determined later. Also, 
for $r> \lambda_n^{-\theta_1}$
let $a_n^{\pm}$ be the Beurling-Selberg polynomials from Lemma~\ref{lem:BSfns},
which majorize and minorize the indicator function of the ball $B(0,r)$ and also satisfy $\widehat a_n^{\pm}(\zeta)=0$
for $|\zeta| \ge T_n=\lambda_n^{\theta_2}$,
with $\theta_2>\theta_1$. The trigonometric polynomials
$$b_{n,y}^\pm(x):=a_n^\pm(x-y) $$
 majorize and minorize the translated ball $B(y,r) = y+B(0,r)$,  and their Fourier coefficients are given by 
$\widehat b_{n,y}^\pm(\zeta) = e^{-i\zeta \cdot y}\widehat a_{n}^\pm(\zeta)$,  which therefore satisfy the same inequalities as 
$\widehat a_{n}^\pm(\zeta)$ in Lemma~\ref{lem:BSfns} (independently of $y$). In particular, $|\widehat b_{n,y}^{\pm}(\zeta)|
\ll r_n^d$ and for $T_n=\lambda_n^{\theta_2}$ with $\theta_2>\theta_1 \ge 0$ it follows that $\widehat b_{n,y}^{\pm}(0)=\tmop{vol}(B(0,r))(1+O(\lambda_n^{\theta_1-\theta_2}))$.

For $\delta >0$ let
\[
\mathcal S^{\pm}=\bigg\{ \lambda_n  : \sup_{ B(y,r) \in \mathcal B_n} \bigg|\frac{\int_{\mathbb{T}^d} b_{n,y}^{\pm}(x) |\psi_n(x)|^2 \, \dvol(x)}{\int_{\mathbb{T}^d} b_{n,y}^{\pm}(x) \, \, \dvol(x)} -1\bigg|  \ge \lambda_n^{-\delta} \bigg\}.
\]
We  will now show that for $\theta_1 <\theta_2< \frac{1}{2(d-1)}-\delta$  the sets $\mathcal S^{\pm}$ have zero density.
First note that
\[
\begin{split}
\sup_{ B(y,r) \in \mathcal B_n}
\bigg|\frac{\int_{\mathbb{T}^d} b_{n,y}^{\pm}(x) |\psi_n(x)|^2 \, \dvol(x)}{\int_{\mathbb{T}^d} b_{n,y}^{\pm}(x) \, \, \dvol(x)} -1\bigg| \le & \sum_{1 \le |\zeta|\le \lambda_n^{\theta_2}} \sup_{ B(y,r) \in \mathcal B_n}\left|\frac{\widehat b_{n,y}^{\pm}(\zeta)}{\widehat b_{n,y}^{\pm}(0) } \langle e_{\zeta} \psi_n,\psi_n \rangle \right| \\
\ll & \sum_{1 \le |\zeta|\le \lambda_n^{\theta_2}}\left|\langle e_{\zeta} \psi_n,\psi_n \rangle \right| \; .
\end{split}
\]
Next, apply Chebyshev's inequality, the above estimate, and Lemma \ref{lem:eigenspace} to get that
\[
\begin{split}
\frac{\# \{ \lambda_n \in \mathcal S^{\pm} : \lambda_n \le \Lambda\}}{\#\{\lambda_n\leq \Lambda\}}
  \ll &	\frac{1}{ \Lambda^{d/2-\delta}} \sum_{\lambda_n \le \Lambda} \sup_{ B(y,r) \in \mathcal B_n}
\bigg|\frac{\int_{\mathbb{T}^d} b_{n,y}^{\pm}(x) |\psi_n(x)|^2 \, \dvol(x)}{\int_{\mathbb{T}^d} b_{n,y}^{\pm}(x) \, \, \dvol(x)} -1\bigg| \\
	\ll& \frac{1}{\Lambda^{d/2-\delta}} \sum_{\lambda \le \Lambda} \sum_{1 \le |\zeta|\le \lambda^{\theta_2}} \sum_{\lambda_n=\lambda} \left|  \langle e_{\zeta} \psi_n,\psi_n \rangle \right| \\
	\ll & \frac{1}{ \Lambda^{d/2-\delta}} \sum_{\lambda \le \Lambda} \sum_{1 \le |\zeta|\le \lambda^{\theta_2}}  \#\{\mu\in \Z^d:|\mu|^2=\lambda=|\mu+\zeta|^2\}. 
\end{split}
\] 
By Lemma \ref{lem:geom numbers} 
\[
\sum_{\lambda \le \Lambda} \sum_{1 \le |\zeta|\le \lambda^{\theta_2}}  \#\{\mu\in \Z^d:|\mu|^2=\lambda=|\mu+\zeta|^2\} \ll \Lambda^{(d-1)/2} \sum_{1\le |\zeta|\le \Lambda^{\theta_2}} \frac{1}{|\widehat \zeta|},
\]
where $\zeta=m\widehat \zeta$ and $\widehat \zeta$ is primitive.
The last sum is bounded by
\begin{equation} \notag
\sum_{1 \le m \le \Lambda^{\theta_2}}  \sum_{1 \le | \widehat \zeta| \le \Lambda^{\theta_2}/m} \frac{1}{|\widehat \zeta|}
\ll  \Lambda^{\theta_2(d-1)} \sum_{1 \le m \le \Lambda^{\theta_2}} \frac{1}{m^{d-1}} \ll
\begin{cases}
 \Lambda^{\theta_2} \log \Lambda \text{ if } d=2, \\
 \Lambda^{\theta_2(d-1)} \text{ if } d \ge 3.
\end{cases}
\end{equation}
Collecting estimates, we have shown that
\[
\frac{\# \{ \lambda_n \in \mathcal S^{\pm} : \lambda_n \le \Lambda\}}{\#\{\lambda_n\leq \Lambda\}}
  \ll 
\Lambda^{\theta_2(d-1)-\frac12+\delta} \log \Lambda
\]
which tends to zero for $\theta_2<\frac{1}{2(d-1)}-\delta$.

To conclude the proof first observe that
if $\lambda_n \notin \mathcal S^+$ with  $\theta_1<\theta_2<\frac{1}{2(d-1)}-\delta$ (so $\lambda_n$ lies in a set of density one) it follows by
 parts $i)$ and $iii)$ of Lemma \ref{lem:BSfns} that
\begin{equation} \label{eq:eqfin1}
\begin{split}
&\int_{B(y,r)} |\psi_n(x)|^2 \dvol(x)-\vol(B(y,r)) \\
& \qquad
\le \int_{\mathbb{T}^d} b_{n,y}^{+}(x) |\psi_n(x)|^2 \, \dvol(x)-\widehat b_{n,y}^+(0)+O(r^d \lambda_n^{\theta_1-\theta_2})\;.
\end{split}
\end{equation}
A similar analysis holds for $\lambda_n \notin \mathcal S^{-}$ with the inequality reversed. Therefore, for $\lambda_n \notin (\mathcal S^{+} \cup \mathcal S^{-})$ and $\theta_1< \theta_2<\frac{1}{2(d-1)}-\delta$ (so $\lambda_n$
 lies in a density one set)
\begin{equation} \label{eq:eqfin2}
\begin{split}
& \sup_{B(y,r) \in \mathcal B_n} \left| \int_{B(y,r)} |\psi_n(x)|^2 \, \dvol(x)- \vol(B(y,r)) \right|  \\
 & \qquad 
\le \max_{\pm} \sup_{ B(y,r) \in \mathcal B_n}
\bigg|\int_{\mathbb{T}^d} b_{n,y}^{\pm}(x) |\psi_n(x)|^2 \, \dvol(x) -\widehat b_{n,y}^{\pm}(0)\bigg|+O(r^d\lambda_n^{\theta_1-\theta_2})  \\
& \qquad \ll r^d \lambda^{-\delta}+ r^d \lambda_n^{\theta_1-\theta_2}
 \; ,
\end{split}
\end{equation}
so the claim follows. 
\end{proof}

\section{Irregularities of quantum equidistribution} \label{sec:scar2}

In the previous section we saw that given an ONB of eigenfunctions $\{\psi_n\}$ the $L^2$ mass of almost all eigenfunctions $\psi_n$ equidistributes within balls with radii $r_n \ge \lambda_n^{-\frac{1}{2(d-1)}+o(1)}$.
We will show the existence of a sequence of eigenvalues $\{\lambda_m\}$ which tends to infinity with corresponding 
eigenfunctions whose $L^2$ mass is not equidistributed within balls with radii $r_m \ge \lambda_m^{-1/2+o(1)}$, which is just above the Planck scale.

\begin{theorem} \label{prop:scar2}
There exists a sequence $\{ \lambda_m \}_m$ of eigenvalues of $-\Delta$ on $\mathbb T^d$ with $\lambda_m\to \infty$ and corresponding $L^2$-normalized eigenfunctions $\psi_m$ so that for any choice of radii $r_m$ so that $r_m\to 0$, but $r_m\sqrt{\lambda_m}\to \infty$, 
$$
\frac 1{\tmop{vol}(B(0,r_m))}\int_{B(0,r_m)}|\psi_n( x)|^2 \tmop{dvol}(x) = 2 +o(1) \qquad (m \rightarrow \infty) \;.
$$
\end{theorem}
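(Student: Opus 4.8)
The plan is to produce $\psi_m$ by hand as the normalized sum of just \emph{two} plane waves whose frequency vectors lie on a common sphere $\{|\mu|^2=\lambda_m\}$ but differ by a \emph{fixed} short integer vector independent of $m$. This forces $|\psi_m|^2$ to be essentially the same function for every $m$, and then the constant $2$ drops out of a one–line Taylor estimate. No dynamical or arithmetic input is needed; the only structural fact exploited is that a permutation of coordinates preserves $|\mu|^2$.

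Concretely, for $a\in\Z_{\ge 1}$ I would set $\mu_a=(a+1,a,0,\dots,0)$ and $\mu_a'=(a,a+1,0,\dots,0)$ in $\Z^d$ (with $d\ge 2$), so that $|\mu_a|^2=|\mu_a'|^2=a^2+(a+1)^2=:\lambda_a\to\infty$ and $\mu_a-\mu_a'=(1,-1,0,\dots,0)$. Since the two frequencies have equal length, $\psi_a:=\tfrac1{\sqrt2}\bigl(e_{\mu_a}+e_{\mu_a'}\bigr)$ is an eigenfunction of $-\Delta$ with eigenvalue $\lambda_a$, and it is $L^2$–normalized because the squares of its two Fourier coefficients are $\tfrac12,\tfrac12$ (the normalization convention $\sum_{|\mu|^2=\lambda}|c(\mu)|^2=1$ of \S\ref{sec:small1}). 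After relabelling, $\{\lambda_a\},\{\psi_a\}$ is the desired sequence. The next step is the pointwise identity
\[
|\psi_m(x)|^2=\tfrac12\bigl|e_{\mu_a}(x)+e_{\mu_a'}(x)\bigr|^2=1+\cos\langle\mu_a-\mu_a',x\rangle=1+\cos(x_1-x_2),
\]
which is independent of $m$.

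Finally I would average this over $B(0,r_m)$. Using $|\cos t-1|\le t^2/2$ and $|x_1-x_2|^2\le 2|x|^2\le 2r_m^2$ on $B(0,r_m)$, one gets $|\cos(x_1-x_2)-1|\le r_m^2$ there, so
\[
\frac1{\vol(B(0,r_m))}\int_{B(0,r_m)}|\psi_m(x)|^2\,\dvol(x)=1+\frac1{\vol(B(0,r_m))}\int_{B(0,r_m)}\cos(x_1-x_2)\,\dvol(x)=2+O(r_m^2),
\]
which is $2+o(1)$ as soon as $r_m\to 0$. (In fact the hypothesis $r_m\sqrt{\lambda_m}\to\infty$ is not used; it only serves to locate the failure of equidistribution above the Planck scale, in contrast with the $\sqrt2\sin,\sqrt2\cos$ basis of the introduction.)

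There is no real obstacle, but there is one point to get right: the separation $|\mu_a-\mu_a'|$ must stay \emph{bounded} as $a\to\infty$. If it tended to infinity one could choose $r_m\to 0$ with $r_m|\mu_a-\mu_a'|\to\infty$, and then the interference integral $\int_{B(0,r_m)}\cos\langle\mu_a-\mu_a',x\rangle$ would oscillate away, pushing the average back to $1$ rather than $2$ — so most ``obvious'' superpositions (e.g.\ $\tfrac1{\sqrt2}(e_\mu+e_{-\mu})=\sqrt2\cos\langle\mu,x\rangle$) are useless here. Swapping two coordinates supplies a pair with $|\mu_a-\mu_a'|^2=2$; since equal–length lattice points $\mu,\mu'$ always satisfy $2\langle\mu-\mu',\mu'\rangle=-|\mu-\mu'|^2$, the value $2$ is moreover the smallest possible.
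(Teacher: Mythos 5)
Your proof is correct, and it is genuinely simpler than the paper's. The paper takes the real eigenfunction $\psi_m(x)=\cos(mx_1+(m+1)x_2)+\cos((m+1)x_1+mx_2)$, which has four Fourier modes ($\pm\mu_1,\pm\mu_2$). Squaring it produces, besides the constant $1$ and the low-frequency term $\cos(x_1-x_2)$, three oscillatory terms $\cos(\langle\mu,x\rangle)$ with $|\mu|\approx m$ (coming from $\mu_1+\mu_2$, $2\mu_1$, $2\mu_2$). To kill these, the paper invokes decay of the Fourier transform of the ball at large frequency, and it is precisely here that the hypothesis $r_m\sqrt{\lambda_m}\to\infty$ is used. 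By instead taking the complex superposition $\tfrac1{\sqrt2}\bigl(e_{\mu_a}+e_{\mu_a'}\bigr)$ with just two modes related by a coordinate swap, you get $|\psi_m|^2=1+\cos(x_1-x_2)$ exactly, with no high-frequency debris; the entire proof then reduces to one Taylor estimate. This buys you a cleaner argument and a slightly stronger conclusion (the constant $2$ appears for any $r_m\to 0$, with no lower bound on $r_m$), at no cost other than working with complex-valued rather than real-valued eigenfunctions. Your closing observation — that $|\mu-\mu'|^2$ must be even for equal-norm lattice points, so $2$ is the smallest possible separation — is correct and explains why the coordinate-swap pair is optimal for keeping the interference term fixed as $\lambda\to\infty$.
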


\begin{proof}
Let 
$\lambda_m = m^2+(m+1)^2$ and take 
$$\psi_m(x) = \cos(mx_1+(m+1)x_2) + \cos((m+1)x_1+mx_2), $$
where $x = (x_1,x_2,\dots, x_d)$, which are $L^2$-normalized eigenfunctions  on $(\TT^d, \dvol)$ with eigenvalue $\lambda_m$. 
See Figure~\ref{fig:scar2} for a plot of the intensities  $|\psi_m(x)|^2$.  
%\marginpar{Reinsert figure} 
\begin{figure}
\begin{center}
  \includegraphics[width=50mm]{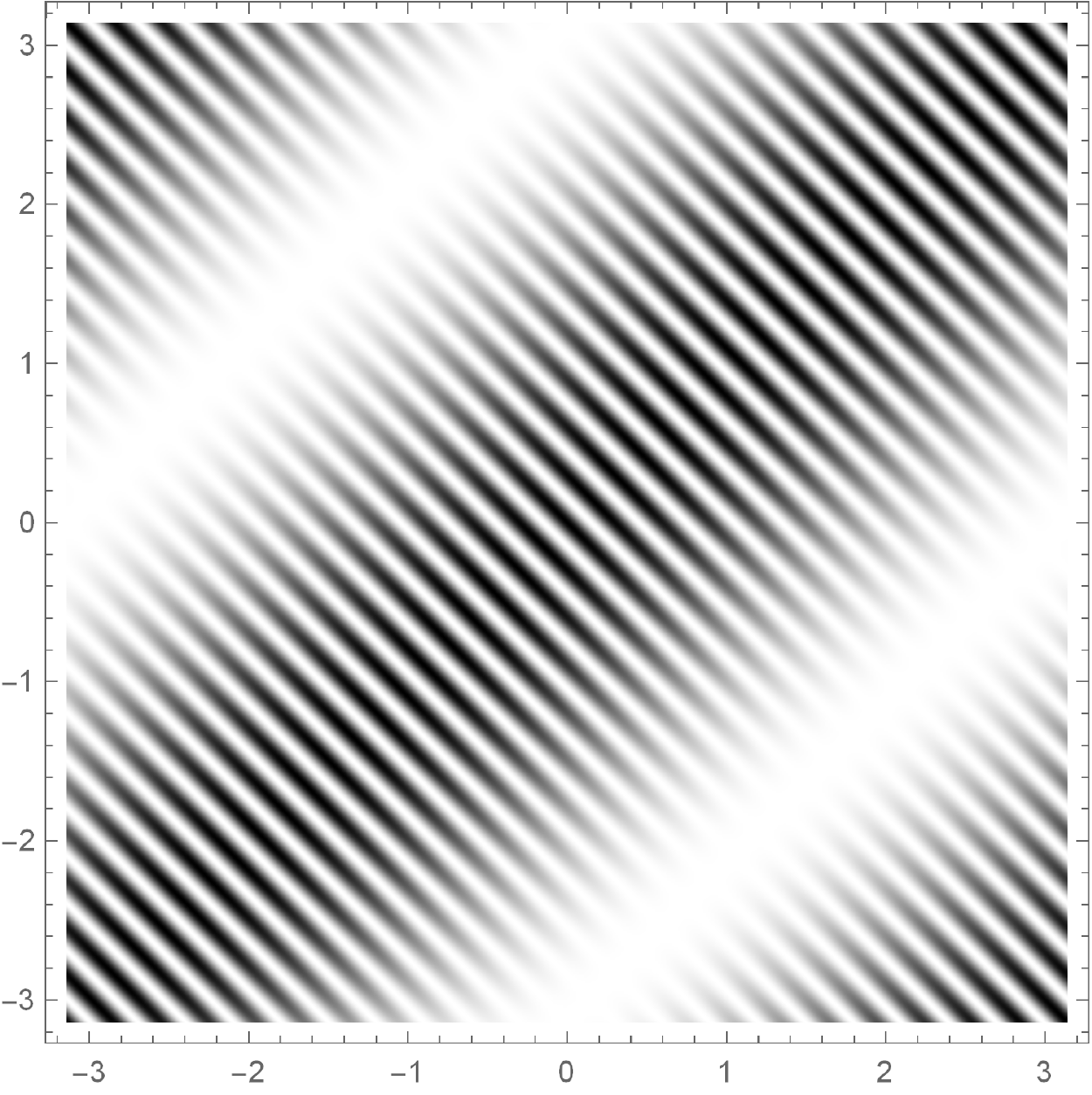} 
 \caption{Plot of  the intensities of $|\psi_m(x)|^2$  for $m=10$ in dimension $d=2$, where $\psi_m(x) = \cos(mx_1+(m+1)x_2) + \cos((m+1)x_1+mx_2).$}\label{fig:scar2}
\end{center}
\end{figure}

Squaring out we get 
\begin{multline*}
|\psi_m(x)|^2 = 1 + \cos(x_1-x_2) + \cos((2m+1)(x_1+x_2)) \\+ \frac 12\cos((2m+2)x_1+2mx_2) +\frac 12\cos (2mx_1+(2m+2)x_2)  
\end{multline*}
 and we wish to average this over the ball $B(0,r_m)$. 

For the term $\cos(x_1-x_2)$, observe that its average over $B(0,r_m)$ tends to $1$, because on this shrinking ball, we have  
$|x_1-x_2|\leq 2r_m$ and hence $\cos(x_1-x_2) = 1+ O(r_m^2)$, so that 
$$ 
\frac 1{\vol(B(0,r_m))}\int_{B(0,r_m)} \cos(x_1-x_2)\tmop{dvol}(x) = 1+O(r_m^2) \to 1, \quad {\rm as}\; r_m\to 0 \;.
$$

To handle the other three terms, note that if $\mu\in \Z^d$ is any frequency vector, then changing variables we find
$$\
\frac 1{\vol (B(0,r_m))}\int_{B(0,r_m)} \cos(\langle \mu, x\rangle) \tmop{dvol}(x) = \frac 1{\vol(B(0,1))}\int_{B(0,1)}\cos(\langle r_m \mu, y\rangle) \tmop{dvol}(y)
$$
that is we get the Fourier transform of the unit ball at the frequency $r_m\mu$. As is well known, the Fourier transform of the unit ball decays in {\em all directions}: 
$$
 \frac 1{\vol( B(0,1))}\int_{B(0,1)}\cos(\langle \xi, x\rangle) \tmop{dvol}(x) \to 0,\quad {\rm as}\; |\xi|\to \infty \;.
$$
 Therefore, applying this to the frequency vectors  $\mu = (2m,2m+2,\vec 0)$, $(2m+2,2m,\vec 0)$ and $(2m+1,2m+1,\vec 0)$, which have length $|\mu|\approx m$,  we get
$$
\frac 1{\vol(B(0,r_m))}\int_{B(0,r_m)} \cos(\langle \mu, x\rangle) \tmop{dvol}(x) \to  0,\quad r_m|\mu|\to \infty \;.
$$
Thus whenever $r_m\to 0$, with $r_m\cdot m \approx  r_m\sqrt{\lambda_m}\to \infty$, 
$$
\frac 1{\vol(B(0,r_m))} \int_{B(0,r_m)} |\psi_m(x)|^2  \tmop{dvol}(x) = 2+o(1)
$$
giving our claim. 
\end{proof}

\section{Below the critical radius: $r<\lambda^{-\frac 1{2(d-1)}}$}\label{sec:below}

In this section, we detail Bourgain's argument which gives for balls with radii $r<\lambda^{-\frac 1{2(d-1)}-o(1)}$  that in each eigenspace there is an orthonormal set of eigenfunctions with size exceeding a positive multiple of the dimension of the eigenspace, which consists of eigenfunctions whose $L^2$ mass is scarce in the ball $B(0,r)$.

Denote 
$$ \mathcal E_{\lambda}= \{\mu\in \Z^d:|\mu|^2=\lambda\},\qquad  N_\lambda = \#\mathcal E_{\lambda} \;.
$$
%We state Bourgain's result as
\begin{theorem}[Bourgain] \label{thm:bourgain}
Suppose $d \ge 3$. Also, if $d=3$ suppose
$\lambda \not\equiv 0,4,7 \mod 8$, and, if $d=4$
suppose $\lambda$ is odd.
Then for each such $\lambda$-eigenspace there exists an orthonormal set of eigenfunctions
$\mathcal A \subset \{ \psi_{\lambda_n}\}_{\lambda_n=\lambda}$ with size $\#\mathcal A \gg N_{\lambda}$ such that 
for each $\psi \in \mathcal A$
\[
\frac{1}{\tmop{vol}(B(0,r))} \int_{B(0,r)} |\psi(x)|^2 \tmop{dvol}(x) \rightarrow 0 \qquad (\lambda \rightarrow \infty)
\]
provided that $r<\lambda^{-\frac{1}{2(d-1)}-o(1)}$.
\end{theorem}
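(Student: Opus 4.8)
The governing observation is that two lattice points $\mu,\mu'\in\mathcal E_\lambda$ which are \emph{close} on the sphere $\{|x|=\sqrt\lambda\}$ produce an eigenfunction
$\psi_{\mu,\mu'}:=\tfrac1{\sqrt2}(e_\mu-e_{\mu'})$
that vanishes at the origin and, having a small frequency gap, stays small on all of $B(0,r)$. Quantitatively, for $x\in B(0,r)$,
\[
|\psi_{\mu,\mu'}(x)|=\tfrac1{\sqrt2}\bigl|1-e^{\i\langle\mu'-\mu,\,x\rangle}\bigr|\le\tfrac1{\sqrt2}|\mu-\mu'|\,|x|\le\tfrac1{\sqrt2}|\mu-\mu'|\,r,
\]
so that
\[
\frac1{\vol(B(0,r))}\int_{B(0,r)}|\psi_{\mu,\mu'}(x)|^2\,\dvol(x)\le\tfrac12\,|\mu-\mu'|^2r^2,
\]
which tends to $0$ as soon as $|\mu-\mu'|=o(1/r)$. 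So I would aim to extract a \emph{disjoint} family of $\gg N_\lambda$ pairs $\{\mu,\mu'\}\subset\mathcal E_\lambda$, each with $|\mu-\mu'|=o(1/r)$. Given such a family, set $\mathcal A:=\{\psi_{\mu,\mu'}\}$; completing each pair by also taking $\tfrac1{\sqrt2}(e_\mu+e_{\mu'})$ and adjoining $e_\mu$ for every unpaired $\mu$ gives an orthonormal basis of the $\lambda$-eigenspace (distinct pairs have disjoint frequency supports, and within a pair the $\pm$ combinations are orthonormal of unit norm), so $\mathcal A$ is an orthonormal set of the required size, and each $\psi\in\mathcal A$ is scarce on $B(0,r)$ by the display above.

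\textbf{Extracting the pairs.} Since $r<\lambda^{-\frac1{2(d-1)}-o(1)}$, I would fix $\delta>0$ with $\delta\sqrt\lambda=\lambda^{\frac1{2(d-1)}+o(1)}$, small enough that $2\delta\sqrt\lambda=o(1/r)$ yet large enough that $\delta^{-(d-1)}=\lambda^{\frac{d-2}2-o(1)}$. Partition the sphere $\{x\in\R^d:|x|=\sqrt\lambda\}$ into $M\approx\delta^{-(d-1)}$ cells of Euclidean diameter $\le 2\delta\sqrt\lambda$ (e.g.\ Voronoi cells of a maximal $\delta\sqrt\lambda$-separated subset). Inside each cell, pair up the points of $\mathcal E_\lambda$ lying in it arbitrarily, leaving at most one point per cell unpaired. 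The resulting pairs are disjoint with $|\mu-\mu'|\le 2\delta\sqrt\lambda=o(1/r)$, and at most $M$ points of $\mathcal E_\lambda$ remain unpaired, so the number of pairs is at least $(N_\lambda-M)/2$, which is $\gg N_\lambda$ once $M=o(N_\lambda)$, i.e.\ once $N_\lambda\gg\lambda^{\frac{d-2}2-o(1)}$. Feeding these pairs into the estimate above gives, uniformly over $\psi\in\mathcal A$,
\[
\frac1{\vol(B(0,r))}\int_{B(0,r)}|\psi(x)|^2\,\dvol(x)\ll(\delta\sqrt\lambda)^2 r^2=o(1)\qquad(\lambda\to\infty).
\]

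\textbf{The arithmetic input and the obstacle.} What remains — and the only place the congruence hypotheses enter — is the lower bound $N_\lambda\gg\lambda^{\frac{d-2}2-o(1)}$ on the dimension of the eigenspace, i.e.\ on the number of representations of $\lambda$ as a sum of $d$ squares. For $d=4$ and $\lambda$ odd this is the exact formula $N_\lambda=8\sigma(\lambda)\gg\lambda$; for $d=3$ and $\lambda\not\equiv0,4,7\pmod 8$ it is the bound $N_\lambda\gg\lambda^{1/2-\varepsilon}$ (every $\varepsilon>0$), which follows from the classical relation between $r_3(\lambda)$ and class numbers of binary quadratic forms together with Siegel's (ineffective) lower bound — the excluded residues being precisely those for which $r_3(\lambda)$ vanishes or is anomalously small; and for $d\ge5$ the bound $N_\lambda\gg\lambda^{d/2-1}$ holds with no restriction. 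The delicate point is purely bookkeeping of the various $\lambda^{o(1)}$ factors: the scale $\delta$ must be chosen so that $M=o(N_\lambda)$ and $(\delta\sqrt\lambda\cdot r)\to0$ hold \emph{simultaneously}, which forces the ``$o(1)$'' gap in the exponent of $r$ to dominate the fluctuation of $N_\lambda$; taking that gap to be a fixed $\varepsilon>0$ makes everything transparent. (Alternatively, one may replace the pigeonhole partition by a direct count of the admissible pairs through estimates for the number of points of $\mathcal E_\lambda$ in a small spherical cap — equivalently, for the number of representations of a positive definite binary quadratic form by a sum of squares of linear forms — as used elsewhere in the paper.)
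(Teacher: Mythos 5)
Your proposal is correct and uses the same key idea as the paper: pair close lattice points $\mu,\mu'\in\mathcal E_\lambda$ (at distance $\ll\lambda^{\frac1{2(d-1)}+o(1)}$) into scarce eigenfunctions $\tfrac1{\sqrt2}(e_\mu-e_{\mu'})$ with disjoint frequency supports, and feed in the arithmetic lower bound $N_\lambda\gg\lambda^{\frac{d-2}{2}-o(1)}$ to get $\gg N_\lambda$ of them. The only difference is in how the disjoint pairs are extracted: you cover the sphere by $\approx\delta^{-(d-1)}$ Voronoi cells of diameter $\approx\delta\sqrt\lambda$ and pair arbitrarily within each cell (at most one leftover per cell), whereas the paper repeatedly applies a density-one cap lemma (Lemma~\ref{lem:latticecap}) and removes one pair per step --- both are the same sphere-packing estimate and yield the same conclusion, so this is a cosmetic rather than structural variant.
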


Completing the orthonormal set $\mathcal A$ in Theorem \ref{thm:bourgain} (in any way) gives an ONB of eigenfunctions $\mathcal B$ with the property that a positive proportion of $\psi \in \mathcal B$ do not equidistribute within the small balls $B(0,r)$, $r<\lambda^{\frac 1{2(d-1)} -o(1)}$. Hence, the scale achieved in Theorems \ref{thm:small balls} and \ref{thm:3dloc} is sharp.

 Before detailing Bourgain's argument we require the following lemma on the distribution of 
points on spheres.
For each point $\mu\in \sqrt{\lambda}S^{d-1}$, we associate the cap $\dcap(\mu;Y)=\tmop{Ball}(\mu,Y)\cap \sqrt{\lambda}S^{d-1}$ of size $Y$ about $\mu$, where $\tmop{Ball}(x,Y)=\{y \in \mathbb R^d : |x-y| \le Y\}$.

\begin{lemma} \label{lem:latticecap}
%Suppose $d \ge 3$. Also, if $d=3$ suppose
%$\lambda \not\equiv 0,4,7 \mod 8$, and, if $d=4$
%suppose $\lambda$ is odd. Let $Y=\lambda^{\frac{1}{2(d-1)}+o(1)}$. 
Suppose for a sequence of $\lambda$'s, we are given a finite set $\mathcal A_\lambda \subset \sqrt{\lambda}S^{d-1}$  of points on the sphere, with cardinality $\#\mathcal A_\lambda\to \infty$ as $\lambda\to \infty$. Let $Y=Y_\lambda$ satisfy $Y_\lambda\gg \lambda^{1/2+o(1)}/(\#\mathcal A_\lambda)^{\frac 1{d-1}}$. 
Then
the set $\mathcal V \subset \mathcal A_{\lambda}$ consisting  of $\nu \in \mathcal A_{\lambda}$ such that 
\[
\#\left( \mathcal A_{\lambda} \cap \tmop{cap}(\nu,Y)\right) \ge 2
\] 
has density one: $\# \mathcal V \sim \#\mathcal A_{\lambda}$ as $\lambda\to \infty$.
\end{lemma}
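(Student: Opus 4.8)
The plan is a packing argument on the sphere $\sqrt\lambda S^{d-1}$. Set $\mathcal W=\mathcal A_\lambda\setminus\mathcal V$ for the exceptional set; by definition $\nu\in\mathcal W$ precisely when $\dcap(\nu,Y)\cap\mathcal A_\lambda=\{\nu\}$, i.e.\ no point of $\mathcal A_\lambda$ other than $\nu$ itself lies within chordal distance $Y$ of $\nu$ (note $\nu\in\dcap(\nu,Y)$ always, so this is the exact complement of the condition $\#(\mathcal A_\lambda\cap\dcap(\nu,Y))\ge 2$). The key observation is that the points of $\mathcal W$ are then spread out: any two distinct $\nu,\nu'\in\mathcal W$ satisfy $|\nu-\nu'|>Y$, so the caps $\dcap(\nu,Y/2)$, $\nu\in\mathcal W$, are pairwise disjoint (a common point $x$ would force $|\nu-\nu'|\le|\nu-x|+|x-\nu'|\le Y$). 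It therefore suffices to prove $\#\mathcal W=o(\#\mathcal A_\lambda)$, for then $\#\mathcal V=\#\mathcal A_\lambda-\#\mathcal W\sim\#\mathcal A_\lambda$.

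Next I would dispose of the trivial range $Y\ge 2\sqrt\lambda$: there $\tmop{Ball}(\nu,Y)$ contains all of $\sqrt\lambda S^{d-1}$ (whose diameter equals $2\sqrt\lambda$), so once $\#\mathcal A_\lambda\ge 2$ every $\nu$ has a neighbour and $\mathcal W=\emptyset$. So assume $Y<2\sqrt\lambda$. Then each $\dcap(\nu,Y/2)$ is an honest spherical cap, and the standard estimate for the area of a cap cut from a sphere of radius $\sqrt\lambda$ by a chord of length $\rho\le\sqrt\lambda$ gives it $(d-1)$-dimensional surface measure $\gg Y^{d-1}$. Comparing the total measure of the disjoint caps indexed by $\mathcal W$ with the surface area $\approx\lambda^{(d-1)/2}$ of $\sqrt\lambda S^{d-1}$ yields
\[
\#\mathcal W\cdot Y^{d-1}\ll\lambda^{(d-1)/2},\qquad\text{hence}\qquad \#\mathcal W\ll\left(\frac{\sqrt\lambda}{Y}\right)^{d-1}.
\]

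Finally I would substitute the hypothesis $Y\gg\lambda^{1/2+o(1)}/(\#\mathcal A_\lambda)^{1/(d-1)}$, which rearranges to $\sqrt\lambda/Y\ll\lambda^{-o(1)}(\#\mathcal A_\lambda)^{1/(d-1)}$ and hence to $\#\mathcal W\ll\lambda^{-(d-1)o(1)}\#\mathcal A_\lambda=o(\#\mathcal A_\lambda)$ as $\lambda\to\infty$ (using $\#\mathcal A_\lambda\to\infty$), completing the proof. There is no real analytic difficulty here: the argument is just the disjointness-of-caps packing bound, and the only points that want a little care are the surface-area estimate for a spherical cap on the large sphere $\sqrt\lambda S^{d-1}$ (separating the regime where $Y/\sqrt\lambda$ is small from that where it is bounded below, both inside the range $Y<2\sqrt\lambda$) and keeping the $\lambda^{o(1)}$ bookkeeping consistent.
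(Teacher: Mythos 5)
Your proof is correct and takes essentially the same route as the paper: both arguments show the exceptional set $\mathcal W=\mathcal A_\lambda\setminus\mathcal V$ is small by observing that the caps $\dcap(\nu,Y/2)$, $\nu\in\mathcal W$, are pairwise disjoint, comparing $\#\mathcal W\cdot Y^{d-1}$ against the total surface area $\ll\lambda^{(d-1)/2}$ of $\sqrt\lambda S^{d-1}$, and then invoking the hypothesis on $Y$ to conclude $\#\mathcal W\ll\lambda^{-o(1)}\#\mathcal A_\lambda$. The only difference is that you handle the trivial range $Y\ge 2\sqrt\lambda$ explicitly before invoking the cap-area lower bound, a small precaution the paper leaves implicit.
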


\begin{proof}
%Let 
%$$ Y = \Big( \frac{2^d \omega_d}{a_d }\frac{\lambda^{(d-2)/2}} {N_\lambda} \Big)^{1/(d-1)} \cdot \lambda^{\frac 1{2(d-1)}}  
%$$
%Then for $d\geq 5$,  $$Y=c_d \lambda^{\frac 1{2(d-1)}}$$
%for a suitable constant $c_d>0$, on recalling that $$N_\lambda \approx \lambda^{(d-2)/2},\quad d\geq 5$$
%while for $d=3,4$ our assumption on $N\gg \lambda^{\frac{d-2}2-o(1)}$ guarantees $Y=\lambda^{\frac 1{2(d-1)}+o(1)}$. 
%
Let 
\[
\mathcal U=\left\{ \mu \in \mathcal A_{\lambda}: \#(\mathcal A_{\lambda}\cap \dcap(\mu;Y ))<2 \right\}.
\]
We wish to show that   $\#\mathcal U =o(\#\mathcal A_{\lambda})$. 

Each point on the sphere $\sqrt{\lambda}S^{d-1}$ is contained in at most one of the caps of size $Y/2$ around $\mu \in \mathcal U$, because if $ \dcap(\mu_1;Y/2)\cap \dcap(\mu_2;Y/2)$ is non-empty for distinct $\mu_1\neq \mu_2 \in \mathcal U$ then  $\mu_2\in \dcap(\mu_1;Y)$ contradicting the assumption $\mu_2 \in \mathcal U$.  Consequently the caps $\dcap(\mu_1;Y/2)$ and $\dcap(\mu_1;Y/2)$
are disjoint, so that we have 
$$ \vol \Big(\bigcup\limits_{\mu \in \mathcal U} \dcap(\mu,Y/2) \Big) = \sum_{\mu \in \mathcal U} \vol\Big(\dcap(\mu,Y/2)\Big) \approx Y^{d-1} \#\mathcal U
$$
%\marginpar{to do: define $\vol$, $a_d$, $\omega_d$}
%\marginpar{Omitting some of the constants caused confusion as it was not possible to say that the proportion was at least $1/2$. Now I claimed that $\#\mathcal U=o(N)$ so no need to worry about constants}
and also we have the trivial bound
$$
 \vol \Big(\bigcup\limits_{\mu \in \mathcal U} \dcap(\mu,Y/2) \Big) \leq \vol(\sqrt{\lambda}S^{d-1}) \ll_d \lambda^{(d-1)/2}\;.
$$
%For $\lambda$ satisfying our hypotheses we have $N_{\lambda} \gg \lambda^{\frac{d}{2}-1-o(1)}$ (see Section \ref{sec:arith}). Thus,
Combining these formulas   we obtain
$$ \frac{\# \mathcal U}{\#\mathcal A_{\lambda}} \ll  \frac{1}{\#\mathcal A_{\lambda}} \cdot \frac{\lambda^{(d-1)/2}}{Y^{d-1}}
\ll \lambda^{-o(1)}\to 0$$
under our assumption on $Y$, which gives the claim.
\end{proof}

\begin{proof}[Proof of Theorem \ref{thm:bourgain}]
First observe that if we have two distinct lattice points 
$\mu\neq \mu'\in \mathcal E_{\lambda}$, which are \underline{close}: $0<|\mu-\mu'|<M_{\lambda}$ (we will take $M_{\lambda}=\lambda^{\frac{1}{2(d-1)}+o(1)}$), then the eigenfunction  
$$\psi_\mu(x):=\frac 1{\sqrt{2}} \Big(e^{i\langle \mu,x \rangle} - e^{i\langle \mu',x \rangle} \Big)
$$
fails to equidistribute in the ball $B(0,r)$ centered at the origin for any  $r=o(M_{\lambda}^{-1})$. Indeed, for $x\in B(0,r)$ 
$$|\psi_\mu(x)|^2 = 1-\cos(\langle \mu-\mu',x\rangle) = O\Big((r|\mu-\mu'|)^2\Big)
$$
and since $r|\mu-\mu'|  \leq rM_{\lambda} = o(1)$, we have  
$$|\psi_\mu(x)|^2 = o(1), \quad x\in B(0,r)\;.
$$
Therefore
$$\frac 1{\vol (B(0,r))} \int_{B(0,r)}|\psi_\mu(x)|^2\dvol(x) \to 0\;.$$

 Next we claim that there is a set $\mathcal S\subset \mathcal E_{\lambda}$ containing a positive  proportion of $\mu$'s ($\#\mathcal S/N_{\lambda} \gg 1$) such that :
\begin{itemize}
\item  for each $\mu\in \mathcal S$ there is another 
lattice point $\mu'$ which is close to $\mu$: $|\mu-\mu'|< \lambda^{\frac 1{2(d-1)}+o(1)}$; 
\item if $\mu\neq\nu\in \mathcal S$ are distinct, then the pairs $\{\mu,\mu'\}$ and $\{\nu,\nu'\}$ are disjoint, that  is $\nu\neq \mu'$ and $\nu'\neq \mu,\mu'$.  
\end{itemize}
Given this, we form for each $\mu\in \mathcal S$ the eigenfunction $\psi_\mu$, and then for $\mu\neq \nu\in \mathcal S$ the pairs $\{\mu,\mu'\}$ and $\{\nu,\nu'\}$ are \underline{disjoint}, and so the  eigenfunctions $\psi_\mu$ and $\psi_\nu$ are orthogonal. This establishes Bourgain's result Theorem \ref{thm:bourgain}

 It remains to prove the claim. Let $Y_\lambda = \lambda^{\frac 1{2(d-1)}+o(1)}$. We construct $\mathcal S$ as follows: In Lemma~\ref{lem:latticecap} first take $\mathcal A_\lambda^0 = \vE_\lambda$, 
and note that under the assumption of the theorem on $\lambda$, we have $\#\mathcal A_\lambda^0=N_\lambda\gg \lambda^{\frac d2-1-o(1)}$  (see Section~\ref{sec:arith}) so that $Y_{\lambda} \gg \lambda^{1/2+o(1)}/N_\lambda^{\frac 1{d-1}}$. Hence by   Lemma~\ref{lem:latticecap} we get a set $\mathcal V$ of density one.  Take some  $\mu \in \mathcal V$; then there exists $\mu'\in \mathcal A_\lambda^0$ such that $0< |\mu-\mu'| < \lambda^{\frac 1{2(d-1)}+o(1)}$. Now  remove the pair $\{\mu,\mu'\}$ from $\mathcal E_{\lambda}$, to obtain a smaller set  $\mathcal A_\lambda^1 = \mathcal A_\lambda^0\backslash \{\mu,\mu'\}$, and repeat this process $(\frac 12-o(1))N_\lambda$ times, at each time getting a non-empty remainder set $\mathcal A_\lambda^j$, of size $\#\mathcal A_\lambda^j\gg  N_\lambda$, so that still  $Y_\lambda \gg  \lambda^{1/2+o(1)}/\#\mathcal A_\lambda^j$ and we can continue to  invoke Lemma~\ref{lem:latticecap}. 
  
We obtain $(\frac 12-o(1))N_\lambda$ resulting pairs, which by construction  are close and disjoint. In this way  we obtain a set $\mathcal S$ of density $\frac 12 -o(1)$ with the desired properties.  
\end{proof}

\section{Results for individual eigenspaces} \label{sec:indspace}

\subsection{Arithmetic background} \label{sec:arith}
%\marginpar{Added background on $R_d(n)$ here} 
We denote by $R_d(n)$ the number of representations of $n$ as a sum of $d$ squares. This is the dimension of the $n$-eigenspace of the Laplacian on $\T^d$. 
%\marginpar{added a factor of $8$ to Jacobi's thm for $R_4$}
For $d=4$, Jacobi's four square theorem says that $R_4(n) = 8 \sum_{d\mid n, 4\nmid d}d$ so that $R_4(n)\ll n^{1+o(1)}$ and for $n$ odd we have a lower bound $R_4(n)\geq 8n$. 
%\marginpar{added a factor of $8$ to the lower bound   $R_4(n)\geq 8n$  (used in \eqref{eq:A4id})}
For $d=3$, we have $R_3(n)\ll n^{1/2+o(1)}$ and Siegel's theorem says that for $n\neq 0,4,7\mod 8$, we have a lower bound $R_3(n)\gg n^{1/2-o(1)}$. When $d \ge 5$, a classical result of Hardy and Ramanujan gives $R_d(n) \approx n^{d/2-1}$.
For more details on these bounds including more precise formulas see e.g.\cite[Chapter 11]{Iwaniec}, \cite{Grosswald}.

For $n,t \ge 1$ let $A_d(n,t)$ denote the number of representations of the positive definite binary quadratic form
\[
Q(x,y)=nx^2+2txy+ny^2
\]
as a sum of squares of $d$ linear forms. That is,
\[
A_d(n,t)=\#\left\{(\mu,\nu)\in \mathbb Z^d \times \mathbb Z^d : \sum_{j=1}^d(\mu_j x+\nu_jy)^2=Q(x,y) \right\}.
\]
where $x$, $y$ are indeterminates.  Equivalently,
\[
A_d(n,t) =\# \left\{ (\mu,\nu) \in \mathbb Z^d \times \mathbb Z^d : |\mu|^2=|\nu|^2=n 
\text{ and } \langle \mu, \nu \rangle=t \right\} \;.
\]

The number of representations of quadratic forms by quadratic forms has been widely studied.
This generalizes the classical problem of representing integers by quadratic forms and for
 a survey of results on these problems see  \cite{SchulzePillot}. The study of the more specific case of representing
a  quadratic form by a sum of squares of linear forms dates back to at least Mordell who studied
the criteria for which such a representation exists in a small number of variables (such a representation always exists
if the number of variables is sufficiently large). In the case $d=3$ 
%Venkov \cite{Venkov} and later Pall \cite{Pall 48} further studied the number of representations $A_3(n,t)$. In particular, Pall
%gives an exact, but complicated formula for $A_3(n,t)$. 
 Venkov \cite{Venkov} \cite[Chapter 4.16]{VenkovBook} and Pall \cite{Pall42, Pall48} studied $A_3(n,t)$, obtaining an exact, but complicated formula for it. From this one can deduce the following useful bound: 
\begin{lemma}%[Proposition 2.2 of \cite{BourgainSarnakRudnick}] 
\label{lem:Abd}
If $|t|<n$ then
\[
A_3(n,t) \ll \gcd(n,t)^{1/2} n^{o(1)}.
\]
\end{lemma}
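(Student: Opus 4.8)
The starting point is the reformulation already in the text: $A_3(n,t)$ is the number of ways of writing the binary quadratic form $Q(x,y) = nx^2 + 2txy + ny^2$ as a sum of squares of three linear forms, i.e. the representation number of $Q$ by the ternary form $x_1^2 + x_2^2 + x_3^2$. The hypothesis $|t| < n$ is exactly what makes $Q$ positive definite and nondegenerate, so this count is finite. The plan is to evaluate it via the exact formula of Venkov and Pall quoted above --- equivalently, via Siegel's mass formula --- and then estimate the resulting arithmetic expression.

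The structural input is that the genus of $x_1^2 + x_2^2 + x_3^2$ consists of a single class (a positive definite unimodular ternary $\Z$-lattice is isometric to $\Z^3$), so Siegel's formula is an exact identity
\[
A_3(n,t) = \kappa\prod_p \beta_p(Q),
\]
with $\kappa$ an absolute constant and $\beta_p(Q)$ the local density of representations of $Q$ by $x_1^2+x_2^2+x_3^2$ over $\Z_p$; the Venkov--Pall formula is a worked-out version of the right-hand side as a finite weighted sum of Hurwitz class numbers $H\bigl(4(n^2-t^2)/f^2\bigr)$, the index $f$ running over integers with $f^2\mid n^2-t^2$ and the weights being multiplicative functions supported on $f$ built from primes dividing $\gcd(n,t)$. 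In either description it remains to bound the quantity, which I would do one prime at a time.

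If $p\nmid 2(n^2-t^2)$ then both forms have good reduction and $\beta_p(Q) = 1 + O(1/p)$; the product over such $p$ converges, and comparing with the full Euler product --- an $L$-value $L(1,\chi_{-(n^2-t^2)})^{\pm1} \ll n^{o(1)}$ together with the effect of removing the finitely many Euler factors at $p\mid n^2-t^2$, which is $\ll \prod_{p\mid n^2-t^2}(1-1/p)^{-2}\ll n^{o(1)}$ --- this contribution is $\ll n^{o(1)}$. For an odd prime $p\mid n^2-t^2=(n-t)(n+t)$ with $p\nmid\gcd(n,t)$ one has, over $\Z_p$, $Q\cong\langle u_1\rangle\perp\langle u_2 p^{s}\rangle$ with $u_1,u_2\in\Z_p^{\times}$ and $s=v_p(n^2-t^2)$, so $\beta_p(Q)\ll s+1$, whence the product over these primes is $\ll d(n^2-t^2)\ll n^{o(1)}$. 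For an odd prime $p\mid\gcd(n,t)$, set $c=v_p(\gcd(n,t))$; over $\Z_p$ one has $Q\cong\langle p^{c}u_1\rangle\perp\langle p^{c+s'}u_2\rangle$ with $u_i\in\Z_p^{\times}$ and $s'=v_p(n^2-t^2)-2c\ge 0$, and the density factors as the density of representing $p^{c}u_1$ by $x_1^2+x_2^2+x_3^2$ --- which is $\asymp p^{c/2}$, exactly the local ingredient in Gauss's three-squares formula --- times the density of representing $p^{c+s'}u_2$ by a unimodular binary $\Z_p$-form, which is $\ll c+s'+1$. Hence $\prod_{p\mid\gcd(n,t)}\beta_p(Q)\ll\gcd(n,t)^{1/2}\,d(n^2-t^2)\ll\gcd(n,t)^{1/2}n^{o(1)}$. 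Finally there are only finitely many $\Z_2$-isometry types of $Q$: either $Q$ is not represented by $x_1^2+x_2^2+x_3^2$ over $\Z_2$, in which case $A_3(n,t)=0$ and there is nothing to prove, or $\beta_2(Q)\ll 2^{v_2(\gcd(n,t))/2}n^{o(1)}$ by the same analysis. Multiplying the four contributions yields $A_3(n,t)\ll\gcd(n,t)^{1/2}n^{o(1)}$.

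The main obstacle is the local density bookkeeping at the ramified primes: one must correctly extract the power $p^{v_p(\gcd(n,t))/2}$ from the density of representing a $p$-divisible scalar by the ternary form, and verify that every remaining local factor (including at $p=2$) costs only $n^{o(1)}$. The one-class-genus statement and Siegel's identity are classical and may simply be quoted --- this is why the Venkov--Pall formula is cited --- so in practice one runs the estimate directly on their class-number expression: apply $H(D)\ll D^{1/2+o(1)}$ (from the class-number formula and $L(1,\chi_D)\ll\log D$) to each term and use multiplicativity of the weights; the term whose $f$ absorbs the $\gcd(n,t)$-part of $n^2-t^2$ produces the factor $\gcd(n,t)^{1/2}$ and dominates, all other terms being smaller.
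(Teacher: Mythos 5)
The paper does not prove this lemma at all; it quotes it, attributing the bound to Pall (\S 7 of his 1942 paper and Theorem 4 of his 1948 paper) and to Proposition~2.2 of Bourgain--Rudnick--Sarnak, and noting that Linnik stated it without the $\gcd(n,t)^{1/2}$. So your submission, which attempts an actual argument, is necessarily a different route from the paper's.

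Your overall strategy --- single-class genus, Siegel's exact local-global formula, and a prime-by-prime bound on the local densities --- is a legitimate way to attack this, and your handling of the unramified primes and of the ramified primes away from $\gcd(n,t)$ is fine. The genuine gap is at the odd primes $p\mid \gcd(n,t)$, where two claims are false and happen to compensate. First, the $p$-adic local density $\alpha_p\bigl(\Z_p^3,\langle p^{c}u_1\rangle\bigr)$ of representing the rank-one form $\langle p^{c}u_1\rangle$ by the unimodular ternary form is bounded uniformly in $c$; it is not $\asymp p^{c/2}$. The $m^{1/2}$ in $R_3(m)=12H(m)$ is entirely archimedean (the singular integral is $\asymp m^{1/2}$, and one can check directly that $R_3(p^{2k})/p^{k}$ converges as $k\to\infty$, forcing $\alpha_p(\Z_p^3,\langle p^{2k}\rangle)=O(1)$). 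Second, the factorization of $\alpha_p(\Z_p^3,Q)$ into a rank-one-by-ternary density times a rank-one-by-unimodular-binary density is not valid: the orthogonal complement in $\Z_p^3$ of a primitive vector of norm $p^{j}u$ with $j>0$ is a binary $\Z_p$-lattice of determinant $p^{j}u$, not a unimodular one, and for imprimitive vectors there is not even an orthogonal splitting. The correct statement is that $\alpha_\infty(\Z^3,Q)\asymp(\det Q)^{(3-2-1)/2}=O(1)$ in this codimension-one situation, so the growth $p^{c/2}$ really does sit inside the $p$-adic density $\alpha_p(\Z_p^3,Q)$ of the \emph{rank-two} form; but extracting it requires either the explicit local density formulas for binary-by-ternary representations or falling back on the Venkov--Pall formula, not the factorization you propose. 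Your closing suggestion --- feeding $H(D)\ll D^{1/2+o(1)}$ termwise into the Venkov--Pall class-number expression --- also does not close the gap as written: with $D\approx n^2-t^2$ this gives $n^{1+o(1)}$ per term, already too big when $\gcd(n,t)=1$. What saves the day in Pall's (and BRS's) treatment is Gauss's genus-theory correspondence between primitive rank-two sublattices of $\Z^3$ and ideal classes, which expresses $A_3$ as essentially a \emph{ratio} of a representation number and a class number rather than a sum of class numbers, and it is the imprimitive embeddings that produce the $\gcd(n,t)^{1/2}$.
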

 This kind of bound was stated and used by Linnik \cite{Linnik40}, who omitted the factor of $\gcd(n,t)^{1/2}$. 
A correct version was given by Pall \cite[\S 7]{Pall42}, \cite[Theorem 4]{Pall48}, 
see also \cite[Proposition 2.2]{BourgainSarnakRudnick}. 

In the case $d=4$, Pall and Taussky \cite{PaTa} established an exact 
formula for $A_4(n,t)$. The relevant case
for us will be when $n$ is odd, in this
case their formulas states the following.

\begin{lemma} \label{lem:A4}
If $n$ is odd and $|t| < n$ then setting $e:=\gcd(n,t)$, we have
%\marginpar{You changed the formulation but now $e$ is undefined...} 
\begin{equation*}
A_4(n,t) = \sum_{h|e} R_4(h)
\cdot \# \{ \nu \in \mathbb Z^3 : |\nu|^2=n^2-t^2, \gcd(\nu_1,\nu_2,\nu_3,e)=h \}\;.
\end{equation*}
%where $R_4(h)=\#\{\mu \in \mathbb Z^4 : |\mu|^2=h\}$.
\end{lemma}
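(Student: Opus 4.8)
\textbf{Proof plan for Lemma~\ref{lem:A4}.}
The plan is to derive the stated formula from the Pall--Taussky exact formula for $A_4(n,t)$ by making the parametrization of representations explicit. First I would set up the correspondence between representations of $Q(x,y)=nx^2+2txy+ny^2$ as a sum of four squares of linear forms and pairs $(\mu,\nu)\in\mathbb Z^4\times\mathbb Z^4$ with $|\mu|^2=|\nu|^2=n$, $\langle\mu,\nu\rangle=t$, which is recorded immediately above the statement. The key structural point, going back to Pall and Taussky for $d=4$, is that when $n$ is odd one can diagonalize the pair: given such $(\mu,\nu)$, the vector $\mu\wedge\nu$ (or rather the ``exterior'' data attached to the $2$-plane spanned by $\mu,\nu$) records a vector $\nu'\in\mathbb Z^3$ of norm $n^2-t^2$, because the Gram determinant of the pair is $n^2-t^2$, and conversely each admissible $\nu'$ of norm $n^2-t^2$ together with a representation of an appropriate divisor $h$ of $e=\gcd(n,t)$ as a sum of four squares reconstructs the pair $(\mu,\nu)$.

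The heart of the matter is the bookkeeping of the common divisor. I would argue that the gcd of the coordinates of the $3$-dimensional vector $\nu'$ attached to a pair, after intersecting with $e=\gcd(n,t)$, controls exactly how much ``room'' is left for the four-square data, and that this residual freedom is counted by $R_4(h)$ where $h=\gcd(\nu'_1,\nu'_2,\nu'_3,e)$. Thus summing over the possible values of $h\mid e$, and for each $h$ over the vectors $\nu\in\mathbb Z^3$ with $|\nu|^2=n^2-t^2$ and $\gcd(\nu_1,\nu_2,\nu_3,e)=h$, and weighting by $R_4(h)$, reproduces $A_4(n,t)$. This is precisely the content of the Pall--Taussky formula \cite{PaTa}, specialized to $n$ odd and $|t|<n$ (the condition $|t|<n$ guaranteeing $n^2-t^2>0$ so that the inner count is over a genuine sphere of positive radius). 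Since the statement is a direct transcription of their theorem in the odd case, the cleanest route is to quote \cite{PaTa} after checking that their hypotheses match ours and that their notation corresponds to ours under the dictionary above.

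The main obstacle is the verification that the $\mathbb Z^3$-sphere of radius $\sqrt{n^2-t^2}$ and the divisor-weighting by $R_4(h)$ in \cite{PaTa} genuinely match the quantity $A_4(n,t)$ as we have defined it via pairs $(\mu,\nu)$; in particular one must be careful that no overcounting or undercounting occurs when a vector $\nu'$ is not primitive, and that the parametrization is a bijection rather than merely a surjection or injection. Once the dictionary is pinned down, the lemma follows with no further computation. I would present the argument as: (i) recall the pair formulation of $A_4(n,t)$; (ii) recall the Gram-determinant identity forcing the auxiliary norm to be $n^2-t^2$; (iii) invoke the Pall--Taussky parametrization to obtain the divisor sum; (iv) note that $n$ odd and $|t|<n$ place us in the regime where their formula takes the stated shape.
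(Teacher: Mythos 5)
Your proposal takes essentially the same route as the paper: the paper offers no proof of this lemma at all, simply quoting the Pall--Taussky formula from \cite{PaTa} after noting that $n$ odd and $|t|<n$ place it in the form stated, and you likewise identify citing \cite{PaTa} as the cleanest path once the dictionary is checked. One small correction to your sketch: the vector in $\mathbb Z^3$ of norm $n^2-t^2$ does not come from the exterior product of $\mu$ and $\nu$ (which in $\mathbb R^4$ lives in a six-dimensional space), but from identifying $\mathbb Z^4$ with integral quaternions and taking the imaginary part of $q_1\bar q_2$, which is what makes the construction — and the lemma — specific to $d=4$.
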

In particular, for $n$ odd Lemma~\ref{lem:A4} gives
\begin{equation}\label{eq:A4id}
A_4(n,t) \ge 8 \cdot R_3(n^2-t^2).
\end{equation}
with equality holding if  $\gcd(n,t)=1$. This is seen by using $R_4(h) \ge 8$ for odd $h$ and noting that  every $\nu$ with $|\nu|^2=n^2-t^2$, will satisfy $\gcd(\nu_1,\nu_2,\nu_3,e)=h$ for some $h\mid e$. 

To get an upper bound for $A_4(n,t)$, 
first note that for $|t|<n$ and $h|e$
\begin{multline*}
\# \{ \nu \in \mathbb Z^3 : |\nu|^2=n^2-t^2, \gcd(\nu_1,\nu_2,\nu_3,e)=h \} \\
 \qquad \qquad \le R_3\left( \frac{n^2-t^2}{h^2}\right) 
\ll  \left(\frac{n^2-t^2}{h^2}\right)^{1/2+o(1)},
\end{multline*}
where in the last step we used the bound $R_3(m) \ll m^{1/2+o(1)}$.
Now use this estimate in Lemma \ref{lem:A4} along with 
the bounds $R_4(h) \ll h^{1+o(1)}$ and $\sum_{h\mid e}1\ll e^{o(1)}$ to get for $n$ odd and $|t|<n$ that
\begin{equation} \label{eq:A4upbd}
A_4(n,t) \ll n^{1/2+o(1)}  (n-t)^{1/2},
\end{equation}
uniformly for $|t|<n$.
\subsection{$L^1$ discrepancy for each $\lambda$-eigenspace}

For $a \in C(\TT^d)$ 
define the localized $L^1$ discrepancy
\[
V_1^{\tmop{loc}}(a,\lambda)=\sum_{\lambda_n =\lambda} |\langle a \psi_n,\psi_n  \rangle-\langle a, 1 \rangle|.
\]

\begin{lemma} \label{lem:localbd}
Suppose $T \le \sqrt{2\lambda}$. Then
\[
\sum_{1 \le |\zeta| \le T}
V_1^{\tmop{loc}}(e_{\zeta},\lambda)
\le \sum_{ \lambda-T^2/2 \le t \le \lambda-1} A_d(\lambda,t).
\]
\end{lemma}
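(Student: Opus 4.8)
The plan is to expand the sum over $\zeta$ on the left-hand side and apply Lemma~\ref{lem:eigenspace} to each term.

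\textbf{Step 1: Bounding each $V_1^{\tmop{loc}}(e_\zeta,\lambda)$.} For a fixed nonzero $\zeta$ with $|\zeta| \le T \le \sqrt{2\lambda}$, Lemma~\ref{lem:eigenspace} gives
\[
V_1^{\tmop{loc}}(e_\zeta,\lambda) = \sum_{\lambda_n=\lambda}\left| \int_{\T^d} e_\zeta(x)|\psi_n(x)|^2\,\dvol(x)\right| \le \#\{\mu \in \Z^d : |\mu|^2 = \lambda = |\mu+\zeta|^2\},
\]
where I have used that $\langle e_\zeta \psi_n,\psi_n\rangle$ and the displayed integral agree, and that $\langle e_\zeta,1\rangle = 0$ since $\zeta \ne 0$. (Note $|\zeta| \le T \le \sqrt{2\lambda} < 2\sqrt{\lambda}$, so the second case of Lemma~\ref{lem:eigenspace} does not occur.)

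\textbf{Step 2: Reinterpreting the count via the inner product with $\nu := \mu+\zeta$.} The condition $|\mu|^2 = |\mu+\zeta|^2 = \lambda$ says that $\mu$ and $\nu := \mu+\zeta$ both lie on the sphere $\sqrt{\lambda}S^{d-1}$ and differ by $\zeta$. Writing $t := \langle \mu,\nu\rangle$, we have $|\zeta|^2 = |\nu-\mu|^2 = |\mu|^2 + |\nu|^2 - 2\langle\mu,\nu\rangle = 2\lambda - 2t$, so $t = \lambda - |\zeta|^2/2$. Since $1 \le |\zeta| \le T$, the value $t$ ranges over $\lambda - T^2/2 \le t \le \lambda - 1/2$, i.e.\ over integers with $\lambda - T^2/2 \le t \le \lambda - 1$ (here $t$ is automatically an integer, being $\langle\mu,\nu\rangle$ with $\mu,\nu\in\Z^d$). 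Conversely, each pair $(\mu,\nu) \in \Z^d\times\Z^d$ with $|\mu|^2=|\nu|^2=\lambda$ and $\langle\mu,\nu\rangle = t$ contributes to exactly one $\zeta = \nu - \mu$ with $|\zeta|^2 = 2\lambda - 2t$, and to the count for that $\zeta$. Therefore
\[
\sum_{1\le|\zeta|\le T} \#\{\mu : |\mu|^2=\lambda=|\mu+\zeta|^2\} = \#\{(\mu,\nu) : |\mu|^2=|\nu|^2=\lambda,\ \lambda-T^2/2 \le \langle\mu,\nu\rangle \le \lambda-1\} = \sum_{\lambda - T^2/2 \le t \le \lambda-1} A_d(\lambda,t),
\]
using the definition of $A_d(n,t)$ from \S\ref{sec:arith}. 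Combining with Step~1 yields the claimed inequality.

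\textbf{Main obstacle.} This argument is essentially a bookkeeping exercise, so there is no deep obstacle; the one point requiring care is making sure the change of index $\zeta \leftrightarrow (\mu,\nu)$ is a genuine bijection onto the right range and that no pairs are double-counted or dropped. In particular one must check that $|\zeta| \ge 1$ corresponds exactly to $t \le \lambda - 1/2$ (hence $t \le \lambda-1$ for integer $t$, which forces $\langle\mu,\nu\rangle \ne \lambda$, i.e.\ $\mu \ne \nu$), and that $|\zeta| \le T$ corresponds to $t \ge \lambda - T^2/2$; the hypothesis $T \le \sqrt{2\lambda}$ guarantees $t \ge 0$ so the relevant $A_d(\lambda,t)$ are the ones controlled by Lemmas~\ref{lem:Abd} and~\ref{lem:A4}. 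One should also note the inequality rather than equality in the final bound comes only from Step~1 (the triangle/AM--GM estimate inside Lemma~\ref{lem:eigenspace}); the combinatorial identity in Step~2 is exact.
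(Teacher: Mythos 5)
Your proof is correct and follows essentially the same route as the paper's: apply Lemma~\ref{lem:eigenspace} termwise, then reindex the count over $\zeta$ via $\nu=\mu+\zeta$ and the inner product $t=\langle\mu,\nu\rangle$ (equivalently $|\zeta|^2 = 2\lambda-2t$) to recognize the sum of $A_d(\lambda,t)$. The one cosmetic difference is that the paper first discards $\zeta$ with $|\zeta|^2$ odd and sums over $\ell=|\zeta|^2$, while you let the integrality of $t$ do that work automatically; the substance is identical.
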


\begin{proof}
Applying Lemma \ref{lem:eigenspace} gives 
 \[
\sum_{1 \le |\zeta| \le T}
V_1^{\tmop{loc}}(e_{\zeta},\lambda)
\le  \sum_{\substack{2 \le |\zeta|^2 \le T^2 }} \# \left\{ \mu : |\mu|^2=\lambda=|\mu+\zeta|^2 \right\}\;.
\]
(Note we can ignore $\zeta$ with $|\zeta|^2$ odd, since for these $\langle e_{\zeta} \psi_n,\psi_n \rangle=0$.)
Next, observe that
\[
\begin{split}
\sum_{\substack{2 \le |\zeta|^2 \le T^2 }} \# \left\{ \mu : |\mu|^2=\lambda=|\mu+\zeta|^2 \right\}
=&   \sum_{2 \le \ell \le T^2}  \sum_{\substack{\mu,\nu \in \mathbb Z^d \\ |\mu|^2=\lambda=|\nu|^2 \\  |\mu-\nu|^2=\ell   }} 1 .
\end{split}
\]
For $|\mu|^2=|\nu|^2=\lambda$ we have $|\mu-\nu|^2=\ell$ iff $\langle \mu, \nu \rangle =(\lambda-\ell/2)$. 
Hence, 
\[
\sum_{2 \le \ell \le T^2}
\sum_{\substack{\mu,\nu \in \mathbb Z^d \\ |\mu|^2=\lambda=|\nu|^2 \\ |\mu-\nu|^2=\ell  }} 1 
= \sum_{ \lambda-T^2/2 \le t \le \lambda-1} A_d(\lambda,t).
\]
\end{proof}

\subsection{Proof of Theorem \ref{thm:3dloc}}
Suppose that $d=3$ or $d=4$.
For $d=3$ suppose
$\lambda \not \equiv 0,4,7 \pmod 8$, so the dimension of the $\lambda$-eigenspace, $N_{\lambda}$,
is  $\approx \lambda^{1/2\pm o(1)}$; if $d=4$
suppose $\lambda$ is odd so that $\lambda\ll N_{\lambda} \ll   \lambda^{1 + o(1)}$.  
Let 
$$
\mathcal B_{\lambda}=\left\{B(y,r) \subset \TT^d
: r \ge \lambda^{-\theta_1} \right\}
$$
for $\theta_1$ to be determined later.
As in the proof of Theorem \ref{thm:small balls} we take $b_{n,y}^{\pm}$ to be Beurling-Selberg polynomials  
which majorize and minorize the indicator function of the ball $B(y,r)$ with $r \ge \lambda_n^{-\theta_1}=\lambda^{-\theta_1}$. 
We take the lengths of the polynomials $b_{n,y}^{\pm}$ to be $T_n=\lambda_n^{\theta_2}=\lambda^{\theta_2}$ with $\theta_2>\theta_1$.
Given an orthonormal basis $ \{\psi_n \}_{\lambda_n=\lambda}$ of the $\lambda$-eigenspace define
\[
\mathcal S_{\lambda}^{\pm}=\left\{ \lambda_n =\lambda : \sup_{B(y,r) \in \mathcal B_{\lambda}} 
\bigg|\frac{\int_{\mathbb{T}^d} b_{n,y}^{\pm}(x) |\psi_n(x)|^2 \, \dvol(x)}{\int_{\mathbb{T}^d} b_{n,y}^{\pm}(x) \, \, \dvol(x)} -1\bigg| \ge \lambda^{-\delta} \right\}.
\]

 Using Lemma~\ref{lem:localbd} along with the bound $\widehat b_{n,y}^{\pm} \ll r^d$   given by Lemma \ref{lem:BSfns} (iv) (which holds uniformly in $y$),   we get from Chebyshev's inequality as in the proof of Theorem~\ref{thm:small balls} that
\[
\begin{split}
\frac{\# \mathcal S_{\lambda}^{\pm}}{N_{\lambda}}  &
\ll  \frac 1{\lambda^{\frac d2-1-2\delta}} 
\sum_{1 \le |\zeta| \le \lambda^{\theta_2}}
\sum_{\lambda_n=\lambda}|\langle e_{\zeta} \psi_n, \psi_n \rangle|
\sup_{B(y,r) \in \mathcal B_{\lambda}}  \left| \frac{\widehat b_{n,y}^{\pm}(\zeta)}{\widehat b_{n,y}^{\pm}(0)}\right| 
\\
&\ll  \frac 1{\lambda^{\frac d2-1-2\delta}} 
\sum_{1 \le |\zeta| \le \lambda^{\theta_2}}
V_1^{\tmop{loc}}(e_{\zeta},\lambda) \\
&\ll \frac 1{ \lambda^{\frac d2-1-2\delta}}
\sum_{\lambda-\lambda^{2\theta_2}/2\leq t<\lambda} A_d(\lambda,t)\;.
\end{split}
\]
Since we assume $d=3$ and $\lambda\not \equiv 0,4,7\pmod 8$
or $d=4$ and $\lambda$ odd, combining Lemma \ref{lem:Abd}
and \eqref{eq:A4upbd} gives
%\marginpar{Formulate in terms of $\langle n\rangle$ to help Naser? \textbf{yes, let's add a remark, see below}}
\[
A_d(\lambda,t) \ll \lambda^{(d-3)/2+o(1)} \gcd(\lambda,t) (\lambda-t)^{(d-3)/2}.
\]
Thus,
\[
\begin{split}
\sum_{\lambda-\lambda^{2\theta_2}/2\le t<\lambda} A_d(\lambda,t)
\ll& \lambda^{(d-3)/2+\theta_2(d-3)+o(1)}  \sum_{\lambda-\lambda^{2\theta_2}/2\le t<\lambda}  \tmop{gcd}(\lambda,t) \\
\ll& \lambda^{(d-3)/2+\theta_2(d-3)+o(1)}  \sum_{e | \lambda}  e \sum_{ \substack{\frac{\lambda-\lambda^{2\theta_2}/2}{e} \le t_0 < \lambda/e} } 1\\
\ll& \lambda^{(d-3)/2+\theta_2(d-1)+o(1)} ,
\end{split}
\]
where in the last step we  bounded the inner sum as $O(\lambda^{2 \theta_2}/e)$ since  if $\lambda^{2\theta_2}/(2e)<1$ then the sum is empty.
Collecting estimates gives
\[
\frac{\# \mathcal S_{\lambda}^{\pm}}{N_{\lambda}}
\ll \lambda^{\theta_2(d-1)-\frac12+3\delta},
\]
which tends to zero if $\theta_1<\theta_2<\frac{1}{2(d-1)}-3\delta$.

Thus, the subset of the ONB $\{\psi_n\}_{\lambda_n=\lambda}$, which consists of eigenfunctions $\psi_n$ with $\lambda_n \notin \mathcal (S_{\lambda}^+ \cup S_{\lambda}^-)$ has
cardinality $N_{\lambda}(1+o(1))$ provided $\theta_1< \theta_2<\frac{1}{2(d-1)}-3\delta$. Repeating the same argument given at the end of
the proof of Theorem \ref{thm:small balls} (see equations \eqref{eq:eqfin1}, \eqref{eq:eqfin2})
we see that each eigenfunction in this subset satisfies
\[
\sup_{B(y,r) \in \mathcal B_{\lambda}} \left| \int_{B(y,r)} |\psi_n(x)|^2 \dvol(x)-\vol(B(y,r)) \right| \ll r^d \lambda^{-\delta}+r^d \lambda^{\theta_1-\theta_2} \; .
\] 
\qed

\begin{remark} \label{rem:caps}
Our argument reduces the problem of small scale quantum ergodicity
to a lattice point estimate, which can be rephrased in terms of statistics of lattice points in caps: 
For each lattice point $\nu\in \vE_\lambda = \{\mu\in \Z^d:|\mu|^2=\lambda\}$, let 
\begin{equation}\label{def of ncap}
\ncap(\nu,Y) = \#(\vE_\lambda\cap \dcap(\nu,Y))-1 = \#\{\mu\in \vE_\lambda:0<|\mu-\nu|\leq Y\}
\end{equation}
be the number of {\em other} lattice points in a cap of size $Y$ about $\nu$. 
In fact we actually show that in
any dimension $d \ge 3$ if
$$\frac 1{N_\lambda} \sum_{|\nu|^2=\lambda} \ncap(\nu,Y) \to 0,\quad {\rm as}\; \lambda\to \infty
$$
%\[
%\sum_{|\mu|^2=\lambda} \Bigg(\frac{1}{N_{\lambda}} \sum_{\substack{|\nu|^2=\lambda \\ 0< |\mu-\nu|^2 < Y^2}} 1\Bigg)  \rightarrow 0 \qquad (\lambda \rightarrow \infty)
%\]
then the assertion of Theorem \ref{thm:3dloc} holds
in dimension $d$ at scales $r>Y^{-1+o(1)}$ (we also assume here that $\lambda\not\equiv 0,4,7 \pmod 8$ if $d=3$ and $\lambda$ is odd if $d=4$, for $d \ge 5$ no such restrictions are needed). That is, given the above, small 
scale quantum ergodicity holds in dimension $d$ at scales above $r>Y^{-1+o(1)}$ on every such $\lambda$-eigenspace.
\end{remark}

\section{Massive irregularities}\label{sec:deep}

In this section we are concerned with the existence of a sequence of eigenfunctions $\psi_{\lambda}$ for which the proportion of the $L^2$ mass of $\psi_{\lambda}$
 within small balls
becomes arbitrarily large as $\lambda \rightarrow \infty$. For $d=4$ we show the existence of such a sequence of eigenfunctions $\psi_{\lambda}$
for balls with radii $r_{\lambda} \le \lambda^{-1/6-o(1)}$. On the other hand, for $d=2$ we are able to rule out this behavior
for balls with radii that shrink sufficiently slowly.

\subsection{Blowup for $d=4$} 
Let
 \begin{equation}\label{deeply scarred ef}
 \psi_{\lambda}(x)=\frac{1}{\sqrt{N_\lambda}} \sum_{|\mu|^2=\lambda} e_{\mu}(x)\;.
 \end{equation} 
We show that  at small scales the $L^2$ mass of $\psi_{\lambda}$ blows up in dimension  $d=4$.

\begin{theorem} \label{thm:scar4} 
Let $\psi_{m}=\psi_{\lambda_m}$
be as given in \eqref{deeply scarred ef} in dimension $d=4$.
Then
along the sequence of odd eigenvalues $\lambda_m$ we have
for any sequence of radii  $r_m<\lambda_m^{-1/6-o(1)}$, 
%$r_m\in [\lambda_m^{-1/2+o(1)},\lambda_m^{-1/6 - o(1)}]$, 
\begin{equation} \notag
 \lim_{m\to \infty} \frac 1{\vol(B(0,r_m))} \int_{B(0,r_m)}|\psi_m(x)|^2 \dvol(x) = \infty.
\end{equation}
\end{theorem}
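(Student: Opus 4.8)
The idea is to compute the average of $|\psi_m(x)|^2$ over the ball $B(0,r_m)$ by expanding the square and recognizing the resulting quantity in terms of the lattice-point statistic $A_4(\lambda_m,t)$ studied in Section~\ref{sec:arith}. First I would write, after squaring out \eqref{deeply scarred ef},
\[
|\psi_m(x)|^2 = \frac{1}{N_{\lambda_m}} \sum_{|\mu|^2=|\nu|^2=\lambda_m} e_{\mu-\nu}(x),
\]
so that averaging over $B(0,r_m)$ and changing variables exactly as in the proof of Theorem~\ref{prop:scar2} turns each term into the Fourier transform of the unit ball evaluated at $r_m(\mu-\nu)$, namely $\widehat{\mathbf 1_{B(0,1)}}(r_m(\mu-\nu))/\vol(B(0,1))$. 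Grouping the pairs $(\mu,\nu)$ according to the value $|\mu-\nu|^2 = 2(\lambda_m-t)$, where $t=\langle\mu,\nu\rangle$, the average becomes a weighted sum $\frac{1}{N_{\lambda_m}}\sum_{t} A_4(\lambda_m,t)\, g(r_m\sqrt{2(\lambda_m-t)})$ for a fixed radial profile $g$ with $g(0)=1$ and $g$ decaying at infinity.

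Next I would extract a lower bound by keeping only the pairs that are \emph{close}, i.e.\ those with $|\mu-\nu|\le Y_m$ for a suitable $Y_m$ chosen so that $r_mY_m\to 0$; for these the profile $g(r_m|\mu-\nu|)$ is $1+o(1)$, so the average is at least $(1+o(1))\frac{1}{N_{\lambda_m}}\sum_{0\le \lambda_m-t\le Y_m^2/2} A_4(\lambda_m,t)$ (one still needs to check that the contribution of the remaining, ``far'' pairs is not too negative — since the Fourier transform of the ball is bounded, that tail contributes $O(N_{\lambda_m})$ to the sum, which after dividing by $N_{\lambda_m}$ is $O(1)$, harmless against a divergent main term). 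So the whole theorem reduces to showing
\[
\frac{1}{N_{\lambda_m}} \sum_{0 \le \lambda_m - t \le Y_m^2/2} A_4(\lambda_m,t) \to \infty
\]
for some $Y_m$ with $r_m Y_m\to 0$, which is possible provided $r_m < \lambda_m^{-1/6 - o(1)}$ — we will aim for $Y_m \approx \lambda_m^{1/6+o(1)}$.

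For the arithmetic input I would use the lower bound \eqref{eq:A4id}, $A_4(n,t)\ge 8R_3(n^2-t^2)$, valid for odd $n$. Writing $\lambda_m-t=s$ with $1\le s\le Y_m^2/2$ (the term $s=0$, i.e.\ $\mu=\pm\nu$, being negligible) and restricting further to $s$ with $\lambda_m-t=\lambda_m+t'$ arithmetically nice, note $n^2-t^2 = (\lambda_m-t)(\lambda_m+t)\approx 2\lambda_m s$; by Siegel's bound $R_3(k)\gg k^{1/2-o(1)}$ for $k\not\equiv 0,4,7\pmod 8$, one gets $A_4(\lambda_m,t)\gg (\lambda_m s)^{1/2-o(1)}$ for a positive proportion of $s\le Y_m^2/2$ (restricting $s$ to a residue class mod $8$ making $2\lambda_m s\not\equiv 0,4,7$, and using that $\lambda_m$ is odd so $n^2-t^2$ is attainable in the right congruence classes). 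Summing over $\lambda_m^{1/3+o(1)}$ admissible values of $s$ up to $Y_m^2/2\approx \lambda_m^{1/3+o(1)}$ yields
\[
\sum_{0\le \lambda_m-t\le Y_m^2/2} A_4(\lambda_m,t) \gg \lambda_m^{1/2-o(1)}\sum_{s\le Y_m^2/2} s^{1/2-o(1)} \gg \lambda_m^{1/2-o(1)}\, (Y_m^2)^{3/2-o(1)} = \lambda_m^{1/2+o(1)} Y_m^{3-o(1)}.
\]
Dividing by $N_{\lambda_m}\ll \lambda_m^{1+o(1)}$ gives a lower bound $\gg \lambda_m^{-1/2-o(1)}Y_m^{3-o(1)}$, which tends to infinity as soon as $Y_m \gg \lambda_m^{1/6+o(1)}$; and since $r_m < \lambda_m^{-1/6-o(1)}$ we may indeed choose such a $Y_m$ with $r_mY_m\to 0$, completing the argument.

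\textbf{Main obstacle.} The delicate point is the arithmetic lower bound: I must verify that for the odd eigenvalues $\lambda_m$ there really is a positive-density set of values $t$ in the window $\lambda_m - Y_m^2/2 \le t < \lambda_m$ for which $n^2-t^2$ avoids the bad residue classes $0,4,7 \pmod 8$ so that Siegel's lower bound for $R_3$ applies, and that the $\gcd(n,t)$ factors and the $\lambda^{o(1)}$ losses in Lemma~\ref{lem:Abd}/\eqref{eq:A4id} do not erode the gain. Controlling the ``far'' pairs' contribution to the Fourier sum (showing it is $O(N_{\lambda_m})$ rather than something that could cancel the main term) is routine given the boundedness and decay of $\widehat{\mathbf 1_{B(0,1)}}$, but should be stated carefully. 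Everything else — the change of variables, the reduction to $A_4$, and the final optimization of $Y_m$ — is bookkeeping parallel to the proofs of Theorems~\ref{prop:scar2} and \ref{thm:3dloc}.
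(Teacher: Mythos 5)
Your overall strategy — expand $|\psi_m|^2$ into Fourier modes, reduce to the lattice-point count $A_4(\lambda,t)$, apply \eqref{eq:A4id} and Siegel's theorem to get the lower bound $S_4(\lambda,T) \gg \lambda^{1/2-o(1)}T^3$, and divide by $N_\lambda \approx R_4(\lambda) \ll \lambda^{1+o(1)}$ — is exactly the paper's (this is Lemma~\ref{lem:deepscar} followed by the last subsection of \S\ref{sec:deep}). Even the congruence trick is essentially the same: the paper restricts to even $t$ so that $\lambda^2-t^2 \equiv 1,5 \pmod 8$, which is what you are reaching for.

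However, there is a genuine gap in the analytic step where you reduce the ball average to the lattice sum, and it is precisely the point you flag as ``routine.'' After writing the average as $\frac{1}{N_\lambda}\sum_{\mu,\nu} h(r(\mu-\nu))$ with $h = \widehat{\mathbf 1_{B(0,1)}}/\vol B(0,1)$, you claim the ``far'' pairs contribute $O(N_\lambda)$ to the sum because $h$ is bounded. Boundedness $|h|\le 1$ only gives the trivial estimate $O(N_\lambda^2)$, since there are $\sim N_\lambda^2$ pairs. Decay is not enough either: $h(\xi)$ oscillates in sign and decays only like $|\xi|^{-(d+1)/2}$, and feeding in the upper bound \eqref{eq:A4upbd} for $A_4(\lambda,t)$ gives a far-tail estimate of order $\lambda^{3/4+o(1)}/(N_\lambda r^{5/2})$ in $d=4$, which at your intended scale $r\approx\lambda^{-1/6}$ is $\lambda^{1/6+o(1)}$ — not $O(1)$, and in fact potentially comparable to or larger than the main term. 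So you cannot simply discard the far pairs; their possibly-negative contribution could wipe out the gain from the close pairs.

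The paper resolves exactly this difficulty in Lemma~\ref{lem:deepscar} by not working with the indicator $\mathbf 1_{B(0,r)}$ at all: it constructs a smooth minorant $F_{r}(x)=\sum_{n\in\Z^d}(g*g)\bigl((x+2\pi n)/r\bigr)$ with $g$ radial, compactly supported, so that $0\le F_r \le \mathbf 1_{B(0,r)}$. The crucial property, which your plan lacks, is that $F_r$ has \emph{non-negative} Fourier coefficients ($\mathcal F(g*g)=|\widehat g|^2\ge 0$). One can then drop all large frequencies with a clean inequality — every term is $\ge 0$ — and retain only the close pairs, giving the lower bound $\gg (1+S_d(\lambda,r^{-1+o(1)}))/N_\lambda$. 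To repair your argument you would need to replace the hard ball by such a positive-definite minorant (or otherwise genuinely control the far tail), rather than invoke boundedness of $\widehat{\mathbf 1_{B(0,1)}}$. Once that is in place, your arithmetic computation goes through unchanged and recovers the paper's conclusion.
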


Note that the result is trivial for $r=o( \lambda^{-1/2})$, because then for $x\in B(0,r)$ we can replace $\psi_\lambda(x) \sim \psi(0)=\sqrt{N_\lambda}$ and then the average of $|\psi_\lambda(x)|^2$ over the ball $B(0,r)$ will be large.  This also implies that for $r \ge \varepsilon \lambda^{-1/2}$ with $\varepsilon>0$ sufficiently small 
\[
\begin{split}
\frac{1}{\tmop{vol}(B(0,r))}
\int_{B(0,r)} |\psi_{\lambda}(x)|^2 \tmop{dvol}(x)
\ge& \frac{1}{\tmop{vol}(B(0,r))}
\int_{B(0,\varepsilon \lambda^{-1/2} )} |\psi_{\lambda}(x)|^2 \tmop{dvol}(x) \\
\gg & \frac{N_{\lambda}}{r^d} \cdot \varepsilon^d \lambda^{-d/2}
\end{split}
\]
in every dimension $d \ge 2$.
Recall for $d \ge 3$, $N_\lambda \gg \lambda^{\frac{d}{2}-1-o(1)}$
provided that $\lambda$ is odd if $d=4$ and if $d=3$, $\lambda \not\equiv 0,4,7 \pmod 8$. For such $\lambda$ the RHS
tends to infinity for $r_{\lambda} \le \lambda^{-\frac{1}{d}-o(1)}$. Theorem \ref{thm:scar4} shows that massive irregularities extend beyond this trivial regime.

For $T\le \sqrt{2\lambda}$ let
\[
S_d(\lambda, T)=\sum_{\lambda-T^2/2\le t <\lambda} A_d(\lambda,t)
\]
and note that in the proof of Lemma \ref{lem:localbd} we saw that 
\begin{equation} \label{eq:Sid}
S_d(\lambda, T)=
\sum_{2 \le |\zeta|^2 \le T^2 } \#\{ \mu : |\mu|^2=\lambda=|\mu+\zeta|^2\}.
\end{equation}
\begin{lemma} \label{lem:deepscar}
Let $\psi_{\lambda}$ be as in \eqref{deeply scarred ef}.
Suppose that $r_\lambda \rightarrow 0$ as $\lambda \rightarrow \infty$. 
Then for any dimension $d \ge 2$
\[
\frac{1}{\tmop{vol} B(0,r_\lambda)}
\int_{B(0, r_{\lambda})} |\psi_{\lambda}(x)|^2 \, \dvol(x) \gg \frac{1+ S_d(\lambda, r_{\lambda}^{-1+o(1)})}{N_\lambda}.
\]
\end{lemma}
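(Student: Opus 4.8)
The aim is to bound the local $L^2$ average of $\psi_\lambda$ from below by testing it against a minorant of the indicator of $B(0,r_\lambda)$ rather than against $\mathbf 1_{B(0,r_\lambda)}$ directly. First I would invoke Lemma~\ref{lem:BSfns} with $r=r_\lambda$ and $T=T_\lambda=r_\lambda^{-1+o(1)}$ (so that $Tr\ge 1$) to obtain the Beurling--Selberg minorant $a^-$ with $a^-\le \mathbf 1_{B(0,r_\lambda)}$, $\widehat a^-(\zeta)=0$ for $|\zeta|\ge T_\lambda$, $\widehat a^-(0)=\vol(B(0,r_\lambda))+O(r_\lambda^{d-1}/T_\lambda)=\vol(B(0,r_\lambda))(1+o(1))$, and $|\widehat a^-(\zeta)|\ll r_\lambda^d$. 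Then
\[
\frac{1}{\vol(B(0,r_\lambda))}\int_{B(0,r_\lambda)}|\psi_\lambda(x)|^2\,\dvol(x)
\ \ge\ \frac{1}{\vol(B(0,r_\lambda))}\int_{\TT^d}a^-(x)\,|\psi_\lambda(x)|^2\,\dvol(x).
\]

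Next I would compute the right-hand side by expanding in Fourier coefficients. Writing $\psi_\lambda=\tfrac1{\sqrt{N_\lambda}}\sum_{|\mu|^2=\lambda}e_\mu$, one has $|\psi_\lambda(x)|^2=\tfrac1{N_\lambda}\sum_{|\mu|^2=|\mu'|^2=\lambda}e_{\mu-\mu'}(x)$, so
\[
\int_{\TT^d}a^-(x)\,|\psi_\lambda(x)|^2\,\dvol(x)
=\frac{1}{N_\lambda}\sum_{|\mu|^2=|\mu'|^2=\lambda}\widehat a^-(\mu-\mu').
\]
The diagonal term $\mu=\mu'$ contributes $\tfrac{N_\lambda}{N_\lambda}\widehat a^-(0)=\vol(B(0,r_\lambda))(1+o(1))$, which after dividing by $\vol(B(0,r_\lambda))$ accounts for the ``$1$'' in the claimed lower bound. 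The remaining issue is to show the off-diagonal contribution is, up to a positive constant and a $1+o(1)$, at least $\vol(B(0,r_\lambda))\cdot S_d(\lambda,T_\lambda)/N_\lambda$. Note the vanishing of $\widehat a^-(\zeta)$ for $|\zeta|\ge T_\lambda$ restricts the off-diagonal sum to pairs with $0<|\mu-\mu'|<T_\lambda=r_\lambda^{-1+o(1)}$, and by \eqref{eq:Sid} there are exactly $S_d(\lambda,T_\lambda)$ such ordered pairs (with $|\mu-\mu'|^2\ge 2$).

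The main obstacle is the sign of $\widehat a^-(\zeta)$ for $\zeta\ne0$: Lemma~\ref{lem:BSfns} only gives $|\widehat a^-(\zeta)|\ll r_\lambda^d$, not positivity, so a naive bound could have the off-diagonal terms cancel the main term or be negative. The standard fix — which I would carry out — is to replace $a^-$ by $a^-*a^-$ (autocorrelation), or more simply to observe that $\int a^-|\psi_\lambda|^2 \ge \int a^- \cdot |\psi_\lambda|^2$ is already $\ge 0$ termwise is false, so instead use that the quantity $\int_{\TT^d} |\psi_\lambda|^2 \mathbf 1_{B(0,r_\lambda)}$ equals $\tfrac1{N_\lambda}\sum_{\mu,\mu'} \widehat{\mathbf 1_{B(0,r_\lambda)}}(\mu-\mu')$ and $\widehat{\mathbf 1_{B(0,r_\lambda)}}(\zeta)=\vol(B(0,r_\lambda))\,\widehat{\mathbf 1_{B(0,1)}}(r_\lambda\zeta)$, where $\widehat{\mathbf 1_{B(0,1)}}(\xi)$ is real, equals $1$ at $\xi=0$, and stays $\ge c_0>0$ (in fact $\ge 1/2$) for $|\xi|\le\rho_0$ for some absolute $\rho_0>0$. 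Choosing $T_\lambda=\rho_0 r_\lambda^{-1}=r_\lambda^{-1+o(1)}$ forces every off-diagonal pair with $|\mu-\mu'|<T_\lambda$ to have $|r_\lambda(\mu-\mu')|\le\rho_0$, hence $\widehat{\mathbf 1_{B(0,1)}}(r_\lambda(\mu-\mu'))\ge 1/2$; discarding all pairs with $|\mu-\mu'|\ge T_\lambda$ (their total Fourier contribution is controlled since $|\widehat{\mathbf 1_{B(0,1)}}|\le 1$ and, more carefully, one splits off a genuinely positive piece) then yields
\[
\frac{1}{\vol(B(0,r_\lambda))}\int_{B(0,r_\lambda)}|\psi_\lambda(x)|^2\,\dvol(x)
\ \gg\ \frac{1+S_d(\lambda,r_\lambda^{-1+o(1)})}{N_\lambda},
\]
which is the assertion. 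I would present the argument via this direct Fourier-positivity route — it is cleaner than going through Beurling--Selberg once positivity is what is needed — using only that $r_\lambda\to0$ to guarantee $T_\lambda\to\infty$ so that the $o(1)$ in the exponent is harmless.
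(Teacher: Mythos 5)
You correctly identify the key obstacle — the Beurling--Selberg minorant has Fourier coefficients of either sign, so one cannot just drop the large-frequency off-diagonal terms — and you even name the right fix (take an autocorrelation, so that the Fourier transform is a square and hence nonnegative). But the route you actually choose to carry out, testing directly against $\mathbf 1_{B(0,r_\lambda)}$, has exactly the same defect: $\widehat{\mathbf 1_{B(0,1)}}(\xi)$ is an oscillating Bessel-type kernel that changes sign as $|\xi|$ grows. So while the off-diagonal pairs with $|\mu-\mu'|\le \rho_0 r_\lambda^{-1}$ indeed contribute $\gg \vol(B)\cdot S_d/N_\lambda$, the pairs with $|\mu-\mu'|>\rho_0 r_\lambda^{-1}$ can contribute a \emph{negative} quantity, and your parenthetical ``their total Fourier contribution is controlled since $|\widehat{\mathbf 1_{B(0,1)}}|\le 1$\ldots'' does not establish this. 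The decay $\widehat{\mathbf 1_{B(0,1)}}(\xi)\ll|\xi|^{-(d+1)/2}$ is not fast enough, combined with crude counts of lattice pairs at distance $R$ up to $R\approx\sqrt{\lambda}$, to guarantee that the discarded part is negligible or even of the right sign; in fact in the regime of Theorem~\ref{thm:scar4} ($d=4$, $r_\lambda\ll\lambda^{-1/6}$) such a naive estimate on the tail can be as large as, or larger than, the main term, so the claimed ``discarding'' is a genuine gap, not a routine cleanup.

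The paper avoids this precisely by not using the raw indicator: it builds the minorant as a scaled autocorrelation $F_{r_\lambda}(x)=\sum_{n}(g*g)\bigl((x+2\pi n)/r_\lambda\bigr)$ with $g(x)=f(|x|)$ smooth, nonnegative, and supported in the unit ball, so that $g*g\le\mathbf 1_{B(0,1)}$ pointwise and $\mathcal F(g*g)=|\mathcal F g|^2\ge 0$. With all Fourier coefficients of $F_{r_\lambda}$ nonnegative, the large-frequency terms can simply be \emph{dropped} (they only increase the sum), and the surviving small-frequency terms with $|\zeta|\le c r_\lambda^{-1}$ are bounded below by $\tfrac12\mathcal F(g*g)(0)$ via the continuity estimate $\mathcal F(g*g)(\xi)=\mathcal F(g*g)(0)+O(|\xi|)$. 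To repair your proof you should actually carry out the autocorrelation construction (e.g. replace $\mathbf 1_{B(0,r_\lambda)}$ by $\vol(B(0,r_\lambda/2))^{-1}\,\mathbf 1_{B(0,r_\lambda/2)}*\mathbf 1_{B(0,r_\lambda/2)}$, or a smooth variant as in the paper) rather than working with the indicator itself; replacing $a^-$ by $a^-*a^-$ is the wrong object, since $a^-*a^-$ is not a minorant of the ball indicator, but autocorrelating a bump supported in the half-radius ball is.
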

 
%\noindent%\textbf{Remark:}
\begin{remark} 
The RHS is bounded below by the mean value
$$
\frac 1{N_\lambda} \sum_{|\nu|^2=\lambda} \ncap(\nu,r_{\lambda}^{-1+o(1)})  
$$
of the other lattice points in caps of size $r_{\lambda}^{-1+o(1)}$, where $\ncap(\nu,Y)$ is as defined in \eqref{def of ncap}. 
So if this tends to infinity  then the conclusion of Theorem \ref{thm:scar4} holds in dimension $d$
at scales $r_{\lambda} $.
\end{remark}

\begin{proof}
We first construct an auxiliary smooth minorant of $\mathbf 1_{B(0,r_{\lambda})}(x)$ on the torus.
Let $f \in C_0^{\infty}(\mathbb R)$ be a nonzero function such that $0\le f(x) \le 1$  and $ \tmop{supp} f =[-\tfrac12,\tfrac12]$. Let $g:\mathbb R^d \rightarrow \mathbb R$ be given by $g(x)=f(|x|)$ and define $F_{r_{\lambda}}:\T^d\to \R$ by 
$$
F_{r_\lambda}(x)=  \sum_{n \in \mathbb Z^d} (g\ast g)\left(\frac{x+2\pi n}{r_{\lambda}}\right)\;.
$$
Observe that
\[
 (g \ast g)(y)=\int_{\mathbb R^d} f(|x|)f(|y-x|) dx <1.
\]
Also, for $|y|\ge 1$
\[
 0 \le (g \ast g)(y) \le \int_{|y-x|<\frac12, |x|<\frac12} 1 \; dx=0.
\]
It follows that $\mathbf 1_{B(0,1)}(x) \ge (g \ast g)(x) \ge 0$. 
Write
\[
\mathcal F(g\ast g)(\xi)=\int_{\mathbb R^d} (g\ast g)(x) e_{-\xi}(x) \, \frac{dx}{(2\pi)^d}
=\left| \int_{\mathbb R^d} g(x) e_{-\xi}(x) \, \frac{dx}{(2\pi)^d} \right|^2
\]
and note 
by Poisson summation
\begin{equation} \notag
\begin{split}
F_{r_\lambda}(x)=  \sum_{n \in \mathbb Z^d} (g\ast g)\left(\frac{x+2\pi n}{r_{\lambda}}\right)=&r_{\lambda}^d \sum_{\zeta \in \mathbb Z^d} \mathcal F(g\ast g)(r_{\lambda}\zeta) e_{\zeta}(x).
\end{split}
\end{equation}
Hence, $F_{r_\lambda}: \mathbb T^d \rightarrow \mathbb R$ is a smooth minorant of $\mathbf 1_{B(0,r_{\lambda})}(x)$ and has \emph{non-negative} Fourier coefficients. Also, observe
 that $\mathcal F(g \ast g)(\xi)=\mathcal F(g \ast g)(0)+O(|\xi|)$.
From these estimates we get that
\[
\begin{split}
\int_{B(0,r_{\lambda})} |\psi_{\lambda}(x)|^2 \, &\dvol(x) \ge  \int_{\mathbb T^d} F_{r_{\lambda}}(x) |\psi_{\lambda}(x)|^2 \, \dvol(x) \\
=&
\frac{r_{\lambda}^d}{N_\lambda} \sum_{\zeta \in \mathbb Z^d} \mathcal F(g\ast g)(r_{\lambda}\zeta)\#\{ \mu : |\mu|^2=\lambda=|\mu+\zeta|^2\} \\
\ge& \frac{r_{\lambda}^d}{2 N_\lambda} \mathcal F(g\ast g)(0)\left(1+\sum_{0 \neq |\zeta| \le r_{\lambda}^{-1+o(1)}} \#\{ \mu : |\mu|^2=\lambda=|\mu+\zeta|^2\} \right)
\end{split}
\]
by dropping the large frequencies using the non-negativity of $\mathcal F(g\ast g)$ and also noting  
 note that $\mathcal F(g\ast g)(0) = \left|\int_{\R^d} g(x)\frac{dx}{(2\pi)^d} \right|^2>0$.
Applying \eqref{eq:Sid} to the inner sum completes the proof.
\end{proof}

\subsection{Proof of Theorem \ref{thm:scar4}}

By Lemma \ref{lem:deepscar} it suffices to show that  for odd values of  $\lambda \rightarrow \infty$ such that for any sequence  $ r_{\lambda}\ll \lambda^{-\frac 16-o(1)}$, 
we have 
$$S_4(\lambda, r_{\lambda}^{-1+o(1)})/R_4(\lambda) \rightarrow \infty \;.
$$

 By definition, if $T=r^{-1+o(1)}$, 
\begin{equation*}
\begin{split}
S_4(\lambda,T) &= \sum_{0<\lambda-t\leq T^2/2}A_4(\lambda,t) 
\\
&\geq
 \sum_{\substack{0<\lambda-t\leq T^2/2\\t\;{\rm even} }}A_4(\lambda,t) \;.
\end{split}
\end{equation*}
We now assume $r>\lambda^{-1/2}$ so that  $|t| < \lambda$. Applying \eqref{eq:A4id} for odd $\lambda$ we have $A_4(\lambda,t) \geq 8R_3(\lambda^2-t^2)$ so that 
$$
S_4(\lambda,T)\geq \sum_{\substack{0<\lambda-t\leq T^2/2\\t\;{\rm even}}}R_3(\lambda^2-t^2) \;.
$$
Recall that if $n\neq 0,4,7 \pmod 8$ then Siegel's theorem gives $R_3(n)\gg n^{\frac 12-o(1)}$.  Now if $\lambda$ is odd and $t$ is even then $ \lambda^2-t^2 = 1,5\pmod 8$ and in particular Siegel's theorem implies 
\begin{equation*}%\label{lower bd for r3delta}
R_3(\lambda^2-t^2)\gg (\lambda^2-t^2)^{\frac 12-o(1)}\gg \lambda^{\frac 12-o(1)} (\lambda-t)^{\frac 12}\;.
\end{equation*}
Hence we find 
\begin{equation}\notag
\begin{split}
S_4(\lambda,T)\gg & 
 \lambda^{\frac 12-o(1)}
\sum_{\substack{0<\lambda-t\leq T^2/2\\ t\;{\rm even}}}(\lambda-t)^{\frac 12} \\
= & \lambda^{\frac 12-o(1)}\sum_{\substack{1\leq m\leq T^2/2 \\m\;{\rm odd}}}m^{1/2} 
\gg  \lambda^{\frac 12-o(1)} T^{3}.
\end{split}
\end{equation}
Hence for $T \approx r^{-1+o(1)}$
%Now we claim that 
%\marginpar{Find reference or give the proof}
%$$\sum_{\substack{m\leq M\\\gcd(m,D)=1}} m^{1/2}= \frac{\phi(D)}{D}M^{3/2} + O(M^{1/2}\tau(D))$$
%where $\phi(D)$ is the Euler totient function, and $\tau(D)$ is the number of divisors of $D$. 
%Using the standard estimates 
%\marginpar{Reference?}
%$$ \tau(D)\ll D^{o(1)},\quad \phi(D)\gg \frac D{\log\log D}$$
%%gives that in our case, when $D=2\lambda$, $M=T^2/2\approx r^{-2+o(1)}$ with  $\lambda^{-1/2}<r\ll \lambda^{-1/6+o(1)}$, that 
%for $T^2/2\approx r^{-2+o(1)}$ 
with  $\lambda^{-1/2}<r\ll \lambda^{-1/6+o(1)}$
%\begin{equation}\label{sum over coprime}
%\sum_{\substack{1\leq m\leq T^2/2 \\ \gcd(m,2\lambda)=1}}m^{1/2}\gg r^{-3-o(1)}
%\end{equation}
%and therefore 
$$
S_4(\lambda,r^{-1+o(1)})\gg  \lambda^{\frac 12-o(1)}r^{-3} \;.
$$
Since $R_4(\lambda) \ll \lambda^{1+o(1)} $, we find that  
 along the sequence of odd integers 
$$\frac{S_4(\lambda, r_{\lambda}^{-1+o(1)})}{R_4(\lambda)} \gg \lambda^{-1/2-o(1)}r^{-3}  \rightarrow \infty
$$ 
for $r_{\lambda}\ll \lambda^{-\frac16-o(1)}$.

\qed

\subsection{Ruling out blowup for $d=2$ at certain scales}
The construction of massive irregularities in the previous section used some features particular to high dimensions.  
In fact for $d=2$, we can rule out the existence of this behavior at scales that are not too small, and
expect that massive irregularities do not exist at all scales that are at least slightly above the Planck scale. More precisely, if $d=2$ then for every eigenfunction $\psi_{\lambda}$ we will prove that the
proportion of $L^2$ mass  inside balls with radii $r_{\lambda} > \lambda^{-1/4+o(1)}$ is bounded and we expect this should be true as long as $r_{\lambda} > \lambda^{-1/2+o(1)}$. 

\begin{proposition} 
Let $\psi_{\lambda}(x)$ be an  $L^2(\TT^2, \tmop{dvol})$ normalized
eigenfuction in dimension $d=2$ with eigenvalue $\lambda$. Then for
any ball with radius $r_{\lambda} > \lambda^{-1/4+o(1)}$
\begin{equation} \label{eq:nodscar}
\sup_{y \in \TT^2}\frac{1}{\tmop{vol}(B(y,r))}
\int_{B(y,r)} |\psi_{\lambda}(x)|^2 \dvol(x) \ll 1.
\end{equation}
\end{proposition}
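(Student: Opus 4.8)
The plan is to use a majorant $b^+$ for the indicator function of the ball $B(y,r)$, coming from the Beurling--Selberg construction of Lemma~\ref{lem:BSfns}, with cutoff length $T=\lambda^{\theta}$ for a suitable $\theta>0$, so that $\widehat b^+(\zeta)=0$ for $|\zeta|\ge T$, $\widehat b^+(0)\ll r^2$ (here $d=2$), and $|\widehat b^+(\zeta)|\ll r^2$. Translating as in the proof of Theorem~\ref{thm:small balls}, we get $b^+_{y}(x)=b^+(x-y)$ with Fourier coefficients $e^{-\i\langle\zeta,y\rangle}\widehat b^+(\zeta)$, so the bounds are uniform in $y$. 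Then
\[
\int_{B(y,r)}|\psi_\lambda(x)|^2\,\dvol(x)\le \int_{\TT^2}b^+_{y}(x)|\psi_\lambda(x)|^2\,\dvol(x)
= \sum_{|\zeta|<T}\widehat b^+_{y}(\zeta)\,\langle e_\zeta\psi_\lambda,\psi_\lambda\rangle,
\]
and dividing by $\vol(B(y,r))\approx r^2$ it suffices to bound $\sum_{|\zeta|<T}|\langle e_\zeta\psi_\lambda,\psi_\lambda\rangle|$ by a constant, uniformly in $y$ (the $y$-dependence has already disappeared after taking absolute values).

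Next I would bound $|\langle e_\zeta\psi_\lambda,\psi_\lambda\rangle|$ pointwise. Writing $\psi_\lambda=\sum_{|\mu|^2=\lambda}c(\mu)e_\mu$ with $\sum|c(\mu)|^2=1$, the Cauchy--Schwarz step from Lemma~\ref{lem:eigenspace} gives
\[
|\langle e_\zeta\psi_\lambda,\psi_\lambda\rangle|\le \#\{\mu\in\Z^2:|\mu|^2=\lambda=|\mu+\zeta|^2\}\cdot \max_\mu|c(\mu)|^2,
\]
but that loses too much; instead I keep $|\langle e_\zeta\psi_\lambda,\psi_\lambda\rangle|\le \tfrac12\sum_{|\mu|^2=\lambda=|\mu+\zeta|^2}(|c(\mu)|^2+|c(\mu+\zeta)|^2)$ and sum over $\zeta$. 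Summing over $|\zeta|<T$ and interchanging, each $\mu\in\vE_\lambda$ is counted with weight $|c(\mu)|^2$ times the number of $\zeta$ with $|\zeta|<T$ and $|\mu+\zeta|^2=\lambda$, i.e.\ the number of \emph{other} lattice points on the circle $\sqrt\lambda S^1$ within distance $T$ of $\mu$ --- the quantity $\ncap(\mu,T)+1$ from \eqref{def of ncap}. Hence
\[
\sum_{|\zeta|<T}|\langle e_\zeta\psi_\lambda,\psi_\lambda\rangle|\ \ll\ \sum_{|\mu|^2=\lambda}|c(\mu)|^2\bigl(1+\ncap(\mu,T)\bigr)\ \le\ 1+\max_{|\mu|^2=\lambda}\ncap(\mu,T).
\]
So the whole proposition reduces to the lattice-point estimate: the number of integer points on a circle of radius $\sqrt\lambda$ lying in an arc of length $\lesssim T$ is $O(1)$ provided $T\le\lambda^{1/4+o(1)}$, equivalently $r>\lambda^{-1/4+o(1)}$.

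The main obstacle, then, is this two-dimensional cap estimate, which is the classical fact that lattice points on a circle of radius $\sqrt\lambda$ are spaced at least $\gg \lambda^{-1/4-o(1)}$ apart in arclength. I would deduce it from the standard divisor-type argument: if $\mu,\mu'\in\vE_\lambda$ with $0<|\mu-\mu'|\le T$, then $\zeta=\mu-\mu'$ satisfies $2\langle\mu,\zeta\rangle=-|\zeta|^2$, so $\mu$ lies on a fixed line, and $\mu,\mu'$ are two points of $\vE_\lambda$ on a chord of length $\le T$; by a rotation/Gaussian-integer argument the number of $\mu'\in\vE_\lambda$ with $|\mu-\mu'|\le T$ is bounded by the number of ways of writing a number of size $\ll \lambda^{1/2}T$ (a relevant norm) as a product, hence $\ll (\lambda T)^{o(1)}$, and in fact $O(1)$ once $T\le\lambda^{1/4-o(1)}$ because any two distinct such points force $|\mu-\mu'|\gg\lambda^{1/4-o(1)}$ (write $\mu=\mu'$ as Gaussian integers of norm $\lambda$; their difference has norm $\le T^2$ but is a multiple of a prime/associate structure forcing norm $\gg\lambda^{1/2-o(1)}$, a contradiction for $T^2<\lambda^{1/2-o(1)}$). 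Collecting this with $\widehat b^+(0)\gg r^2$ and choosing $\theta$ just below $1/4$ (absorbing the $o(1)$) yields \eqref{eq:nodscar}. The only real content beyond bookkeeping is the circle cap bound; everything else is the same majorant manipulation already used for Theorem~\ref{thm:small balls}.
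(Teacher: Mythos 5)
Your overall strategy is identical to the paper's: majorize $\mathbf 1_{B(y,r)}$ by a Beurling--Selberg polynomial, expand $\psi_\lambda=\sum c(\mu)e_\mu$, use the AM--GM step $|c(\mu)\overline{c(\nu)}|\le \tfrac12(|c(\mu)|^2+|c(\nu)|^2)$ from Lemma~\ref{lem:eigenspace}, and reduce the problem to bounding the maximal number of lattice points of $\vE_\lambda$ in an arc of length $\ll 1/r$ on the circle $\sqrt\lambda\,S^1$. (A small cleanup: choose the cutoff $T$ of order $1/r$ rather than a fixed power $\lambda^\theta$, so that the hypothesis $Tr\ge1$ of Lemma~\ref{lem:BSfns} is satisfied uniformly over $r>\lambda^{-1/4+o(1)}$; the paper takes $T=2/r$.)

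The genuine gap is in your treatment of the arc-count, which is precisely the one piece of content you flag. What you need is that an arc of length $\lambda^{1/4-o(1)}$ on the circle of radius $\sqrt\lambda$ contains $O(1)$ lattice points; the paper obtains this by citing the theorem of Cilleruelo and C\'ordoba \cite{CillerueloCordoba}, which states that for any $\delta>0$, arcs of length $R^{1/2-\delta}$ contain $O_\delta(1)$ lattice points. Your sketch does not establish this. The divisor-type count you invoke gives only $\lambda^{o(1)}$, not $O(1)$, which is not enough after dividing by $\vol(B(y,r))\approx r^2$. And the stronger intermediate claim you then assert --- that any two distinct $\mu,\mu'\in\vE_\lambda$ satisfy $|\mu-\mu'|\gg\lambda^{1/4-o(1)}$ --- is not a consequence of your Gaussian-integer manipulation and is not what Cilleruelo--C\'ordoba proves (their bound is on the \emph{number} of points in the arc, with a constant depending on $\delta$, not on the minimal gap between two points; the trivial area argument only forbids a \emph{third} point within an arc of length $\ll \lambda^{1/6}$). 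Writing $\mu\bar{\mu'}-\lambda=(\mu-\mu')\bar{\mu'}$ gives $|\operatorname{Im}(\mu\bar{\mu'})|\le|\mu-\mu'|\sqrt\lambda$, and the integrality of $\operatorname{Im}(\mu\bar{\mu'})$ then yields only the trivial gap $|\mu-\mu'|\gg\lambda^{-1/2}$. The Cilleruelo--C\'ordoba theorem requires a genuinely multiplicative/factorization argument and is not a one-line consequence of these observations. So: correct reduction, but the key lemma needs to be cited (or proved properly), not waved through.
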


\begin{proof}
Let $b_y^{+}$ be the translated Beurling-Selberg polynomial described in the proof of Theorem \ref{thm:small balls}   which majorizes the indicator function of $B(y,r)$
on $\TT^2$ with length $T=2/r$, so in particular
$\Big|\widehat b_y^+(\zeta)/\tmop{vol}(B(y,r)) \Big| \ll 1$, uniformly for $y \in \TT^2$.
Write
\[
\psi_{\lambda}(x)=\sum_{|\mu|^2=\lambda} c(\mu) e_{\mu}(x)
\]
and argue as in the proof of Lemma \ref{lem:localbd} to get
\[
\begin{split}
\frac{1}{\tmop{vol}(B(y,r))}
\int_{B(y,r)} |\psi_{\lambda}(x)|^2 \dvol(x) \ll& 1+ \sum_{2 \le \ell \le T^2}  \sum_{\substack{\mu,\nu \in \mathbb Z^2 \\ |\mu|^2=\lambda=|\nu|^2 \\  |\mu-\nu|^2=\ell   }} |c(\mu)c(\nu)| \\
\le& 1+   \sum_{|\mu|^2=\lambda} |c(\mu)|^2 \sum_{2 \le \ell \le T^2}\sum_{\substack{|\nu|^2=\lambda \\  |\mu-\nu|^2=\ell   }} 1,
\end{split}
\]
uniformly for $y \in \TT^2$.
To bound the inner sum, let 
\[
M(R,\rho)=\max_{|\mu|=R} \#\{ \nu\in \Z^2 : |\mu|=R=|\nu|, \; |\mu-\nu| \le \rho \}
\]
be the maximal number of lattice  points in an arc of size $\rho$ on the circle of radius $R$. Note that 
\[
\sum_{2 \le \ell \le T^2}\sum_{\substack{|\nu|^2=\lambda \\  |\mu-\nu|^2=\ell   }} 1 \le M(\sqrt{\lambda},T)-1.
\]
Since $T=2/r$ we conclude
\[
\sup_{y \in \TT^2}
\frac{1}{\tmop{vol}(B(y,r))}
\int_{B(y,r)} |\psi_{\lambda}(x)|^2 \dvol(x) \ll M\left(\sqrt{\lambda}, \frac{2}{r}\right).
\] 
A result of Cilleruelo and C\'ordoba \cite{CillerueloCordoba} states that  for any $0<\delta<1/2$, 
$$
M(R,R^{1/2-\delta}) \ll_{\delta} 1\;,
$$ 
thus \eqref{eq:nodscar} holds for $r>\lambda^{-1/4+o(1)}$ as claimed. Moreover, we expect that $M(R,R^{1-\delta}) \ll_\delta 1$; this would imply that \eqref{eq:nodscar} holds for $r_{\lambda}>\lambda^{-1/2+o(1)}$. 
\end{proof}

\end{document}